\def\a{\alpha}
\def\e{\varepsilon}
\def\vs{\vskip .6cm}
\def\beq{\begin{equation}}
\def\eeq{\end{equation}}
\def\bea{\begin{eqnarray*}}
\def\eea{\end{eqnarray*}}
\def\beaa{\begin{eqnarray}}
\def\eeaa{\end{eqnarray}}
\def\ba{\begin{array}}
\def\ea{\end{array}}
\def\e{\varepsilon}
\def \RM{\mathbb{R}}
\def \ZM{\mathbb{Z}}
\def \CM{\mathbb{C}}
\def \HM{\mathbb{H}}
\def \SM{\mathbb{S}}
\def\rk{\mathrm{rk}}
\def\be{\begin{equation}}
\def\ee{\end{equation}}
\def\rk{\mathrm{rk}}
\def\spin{\mathfrak{spin}}
\def\gg{\mathfrak{g}}
\def\hh{\mathfrak{h}}
\def\nn{\mathfrak{n}}
\def\mm{\mathfrak{m}}
\def\SU{\mathrm{SU}}
\def\U{\mathrm{U}}
\def\A{\mathrm{A}}
\def\B{\mathrm{B}}
\def\C{\mathrm{C}}
\def\D{\mathrm{D}}
\def\E{\mathrm{E}}
\def\G{\mathrm{G}}
\def\F{\mathrm{F}}
\def\SO{\mathrm{SO}}
\def\Sp{\mathrm{Sp}}
\def\Spin{\mathrm{Spin}}
\def\ad{\mathrm{ad}}
\def\Gr{\mathrm{Gr}}
\def\T{\mathrm{T}}
\def\RR{{\mathcal R}}
\def\W{{\mathcal W}}
\newtheorem{epr}{Proposition}[section]
\newtheorem{ath}[epr]{Theorem}
\newtheorem{elem}[epr]{Lemma}
\newtheorem{ecor}[epr]{Corollary}
\theoremstyle{definition}
\newtheorem{ere}[epr]{Remark}
\newtheorem{exe}[epr]{Example}
\title[Weakly complex structures]{Weakly complex homogeneous spaces}
\author{Andrei Moroianu, Uwe Semmelmann}
\address{Andrei Moroianu \\ CMLS\\ {\'E}cole
  Polytechnique \\ UMR 7640 du CNRS
\\ 91128 Palaiseau \\ France}
\email{am@math.polytechnique.fr}
\address{Uwe Semmelmann\\
Institut f\"ur Geometrie und Topologie \\
Fachbereich Mathematik\\
Universit{\"a}t Stuttgart\\
Pfaffenwaldring 57 \\
70569 Stuttgart, Germany
}
\email{uwe.semmelmann@mathematik.uni-stuttgart.de}
\date{\today}
\thanks{This work was supported through the program "Research in Pairs" by the Mathematisches Forschungsinstitut Oberwolfach. We thank the institute for the hospitality and the stimulating research environment. }
\begin{document}

\begin{abstract}

We complete our recent classification \cite{gms} of compact inner symmetric spaces with weakly complex tangent bundle
by filling up a case which was left open, and extend this classification
to the larger category of compact homogeneous spaces with positive Euler characteristic. We show that 
a simply connected compact equal rank homogeneous space has weakly complex tangent bundle if and only
if it is a product of compact equal rank homogeneous spaces which either carry an invariant almost complex structure
(and are classified by Hermann \cite{h55}), or have stably trivial tangent bundle (and are classified by
Singhof and Wemmer \cite{sw86}), or belong to an explicit list of weakly complex spaces which have neither 
stably trivial tangent bundle, nor carry invariant almost complex structures. 
\vs

\noindent
2000 {\it Mathematics Subject Classification}: Primary 32Q60, 57R20, 53C26,
53C35, 53C15.

\medskip
\noindent{\it Keywords}: invariant almost complex structure, weakly complex bundle,
homogeneous spaces.
\end{abstract}

\maketitle

\section{Introduction}

It is well-known \cite{a69} that a compact homogeneous space $G/H$ has non-vanishing Euler characteristic 
if and only if $G$ and $H$ have equal rank.
If this happens, then the Euler characteristic of $G/H$ is positive, equal to the quotient
of the cardinals of the Weyl groups: $\chi(G/H)=\sharp\W(G)/\sharp\W(H)$. For this reason, we will refer throughout this paper 
to compact homogeneous spaces with positive Euler characteristic as {\em equal rank homogeneous spaces}, a terminology
which seems to be used by some authors.

In this paper we study the following question: {\em Which equal rank homogeneous spaces have complex,
or more generally, weakly complex tangent bundle?} Recall that a real vector bundle $\tau$ is called weakly complex if 
there exists some trivial bundle $\epsilon$ such that $\tau\oplus\epsilon$ has a complex structure, that is, an endomorphism field
squaring to minus the identity. Note that no invariance property is required for the (weakly) complex structure in the above 
question.

Equal rank compact homogeneous spaces carrying {\em invariant} (also called {\em homogeneous}) 
almost complex structures were classified by Hermann \cite[Thm. 5.3]{h55}.
The classification is first reduced to the case where the group $G$ is simple and simply connected. Once this is done,
the group $H$ can either be semi-simple, which leads to nine cases, each of them corresponding to an exceptional group $G$, or
non semi-simple. In the latter situation, $H$ has to be the centralizer of a torus in $G$ up to four exceptional cases 
(one of which seems to have been overlooked in \cite{h55}).
Note that if $H$ is the centralizer of a torus, then the homogeneous space $G/H$ has an invariant {\em integrable} complex structure
\cite[Sect. 7]{wang}. This case includes the Hermitian symmetric spaces and generalized flag manifolds \cite[Ch. 8]{besse}.

In order to attack the general question, one needs completely different methods. The most powerful is a combination of
the Atyiah-Singer index theorem applied to some twisted Dirac operators, the Borel--Weil--Bott theorem and Weyl's dimension formula, which 
we recently used \cite{gms} in order to prove that the only compact irreducible inner
symmetric spaces with weakly complex tangent bundle are the even-dimensional spheres, the Hermitian symmetric spaces and (conceivably) the exceptional space $\E_7/(\SU(8)/\ZM_2)$.
As a mater of fact, the first important achievement of the present paper is to rule out this exceptional case (Theorem \ref{e7} below), thus completing the
classification in \cite{gms}.

One important ingredient which allows the passage from symmetric spaces to more general homogeneous spaces 
is the Borel--de Siebenthal \cite{bs49} classification of maximal subgroups of
maximal rank in compact simple Lie groups. It turns out that if $H$ is maximal in $G$ and $\rk(H)=\rk(G)$, then either 
$(G,H)$ is a symmetric pair, or it belongs to a list of seven exceptional cases, in each of them $G/H$ carrying an invariant almost
complex structure. 

The crucial assumption $\rk(H)=\rk(G)$ allows one to reduce the problem of the existence
of weakly complex structures on $G/H$ to the case where $G$ is simple.
Then, using the Borel--de Siebenthal classification, and the results in \cite{gms}, 
we prove the following classification result:

\begin{ath} An equal rank simply connected compact homogeneous space has weakly complex tangent bundle if and only if 
it is a product of manifolds belonging to the following list:
\begin{enumerate}
\item equal rank homogeneous spaces with an invariant almost complex structure;
\item equal rank homogeneous spaces with stably trivial tangent bundle;
\item one of the homogeneous spaces
\begin{itemize}
\item $\F_4/(\Spin(4)\times \T^2)$ 
\item $\F_4/(\Spin(4)\times \U(2))$ 
\item $\SO(2p+2q+1)/(\SO(2p)\times U)$
\item $\Sp(p+q)/(\Sp(1)^p\times U)$,
\end{itemize}
where  in the last two cases, $U$ is a rank $q$ subgroup of $\U(q)$ for some $q\ge 1$ and $p\ge 2$.
\end{enumerate}
Moreover, the manifolds in $(3)$ have neither stably trivial tangent bundles, nor invariant almost complex structures.
\end{ath}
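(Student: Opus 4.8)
The plan is to prove both implications by first reducing to the case where the acting group is simple, and then combining the Borel--de Siebenthal \cite{bs49} classification with the index-theoretic machinery of \cite{gms}. Since $M=G/H$ is simply connected with $\chi(M)>0$, the subgroup $H$ is connected and of maximal rank, and we may take $G$ simply connected, so $G=G_1\times\cdots\times G_k$ with each $G_i$ simple. Because $H$ contains a maximal torus $T$, its root system is a disjoint union over the simple factors, whence $\hh=\bigoplus_i(\hh\cap\gg_i)$ and $H=H_1\times\cdots\times H_k$, giving $M=\prod_i G_i/H_i$. The tangent bundle then splits as $\T M=\bigoplus_i \mathrm{pr}_i^*\,\T(G_i/H_i)$. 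A direct sum of stably complex bundles is stably complex, so a product of weakly complex factors is weakly complex; conversely, restricting a stable complex structure on $\T M$ to a slice $\{*\}\times\cdots\times G_i/H_i\times\cdots\times\{*\}$ shows each factor is weakly complex. Hence $M$ is weakly complex if and only if every $G_i/H_i$ is, and it suffices to classify weakly complex $G/H$ with $G$ simple and $H$ connected of maximal rank.

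For $G$ simple I would run the analysis along a chain $H=H_0\subset H_1\subset\cdots\subset H_m=G$ in which each $H_{j-1}$ is a maximal connected subgroup of maximal rank of a simple factor of $H_j$. Borel--de Siebenthal controls the top step $K:=H_{m-1}$: either $(G,K)$ is a symmetric pair, or $(G,K)$ is one of the seven exceptional pairs, each of which carries an invariant almost complex structure and so falls under $(1)$. In the symmetric case $G/K$ is inner symmetric, and the classification of \cite{gms} together with Theorem \ref{e7}, which eliminates the remaining candidate $\E_7/(\SU(8)/\ZM_2)$, forces a weakly complex $G/K$ to be an even-dimensional sphere (stably parallelizable, type $(2)$) or a Hermitian symmetric space (type $(1)$).

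The passage from $K$ to the possibly smaller $H$ is organized by the homogeneous fibration $K/H\to G/H\to G/K$, under which the isotropy representation splits as $\gg/\hh=(\k/\hh)\oplus(\gg/\k)$ and hence $\T(G/H)\cong [G\times_H \k/\hh]\oplus\pi^*\T(G/K)$. Weak complexity of $\T(G/H)$ is therefore a statement about the $H$-representation $\gg/\hh$, which, since $H\supseteq T$, decomposes into root spaces. The decisive tool is the one exploited in \cite{gms}: a stable complex structure yields a $\Spin^c$ structure, and the $G$-equivariant indices of the associated Dirac operators twisted by homogeneous bundles are computed by Borel--Weil--Bott as virtual representations; requiring these to be genuine representations of the correct sign produces, through Weyl's dimension formula, polynomial constraints on the highest weights that fail outside an explicit list. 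Feeding the Borel--de Siebenthal possibilities for $H$ into this scheme, and treating the non-semisimple subgroups $U\subseteq\U(q)$ uniformly through the continuous torus parameter, I expect precisely the four families of $(3)$ to survive beyond types $(1)$ and $(2)$.

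For the converse on the spaces in $(3)$ I would exhibit the stable complex structures directly: in each case the factor $\SO(2p)$, $\Sp(1)^p$, or $\Spin(4)$ acts on the non-abelian part of $\gg/\hh$ through a representation carrying a natural complex (or quaternionic, hence after stabilization complex) structure, while the summands paired with $U\subseteq\U(q)$ inherit a complex structure from $\U(q)$, and assembling these gives the required lift. That these spaces are not stably parallelizable would follow by displaying a nonzero Pontryagin class, and that they admit no invariant almost complex structure follows because $H$ is neither the centralizer of a torus nor one of the semisimple exceptions in Hermann's list \cite{h55}. The main obstacle, as in \cite{gms}, will be the non-symmetric, non-maximal case: organizing the induction over all Borel--de Siebenthal chains while extracting Weyl-dimension estimates sharp enough to eliminate every configuration except the four listed families, and in particular confirming that the entire one-parameter families of tori $U$ behave uniformly.
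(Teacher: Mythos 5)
Your reduction to $G$ simple and your treatment of the top of the chain via Borel--de Siebenthal match the paper, but there are two genuine gaps in the rest.

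First, the ``only if'' direction. You propose to feed every Borel--de Siebenthal chain into the $\Spin^c$/Dirac-operator/Borel--Weil--Bott machinery of \cite{gms} and assert that ``precisely the four families of $(3)$ survive.'' That sentence is the entire content of the theorem, and you never execute the computation; running index-theoretic constraints on arbitrary maximal-rank subgroups of all simple groups is not a feasible uniform argument and is not what the paper does. The paper never applies index theory to a non-symmetric quotient. Instead it uses a purely structural mechanism: for a chain $H\subset H'\subset G$ the normal bundle of the fiber $H'/H$ in $G/H$ is trivial (Lemma \ref{l2}), so weak complexity of $G/H$ forces weak complexity of $H'/H$; one then chooses intermediate subgroups so that $H'/H$ is a real or quaternionic Grassmannian, which is not weakly complex by Theorem \ref{t13} (plus the new Theorem \ref{e7}). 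This reduces the whole classification to the symmetric case already settled in \cite{gms}, together with the explicit subgroup lists of Lemma \ref{class1}. Without this reduction your argument has no way to eliminate, say, $\Spin(2n+1)/(\Spin(2m)\times\Spin(2p_1)\times\cdots)$ with two orthogonal factors.

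Second, the ``if'' direction for the spaces in $(3)$. Your sketch --- that $\Sp(1)^p$ acts on the non-abelian part of $\gg/\hh$ through a representation with a natural complex or quaternionic structure --- is false: the isotropy representation $\mm_p$ of $\Sp(p)/\Sp(1)^p$ admits no invariant complex structure (this is exactly why these spaces have no invariant almost complex structure), and a quaternionic structure does not stabilize to a complex one in the sense you need. The paper's Proposition \ref{cn} instead uses the reformulation of Lemma \ref{alt} ($\tau$ is weakly complex iff $\tau\oplus\gamma$ is trivial for some complex $\gamma$): it embeds $\RM^p\oplus\mm_p$ into the trivial bundle associated to $\llbracket\Lambda^2\HM^n\rrbracket$ and checks that the complementary summands $\llbracket\Lambda^2\HM^q\rrbracket$ and $\llbracket\HM^q\otimes\HM^p\rrbracket$ carry $(\C_1)^p\times U$-invariant complex structures coming from \emph{left} multiplication by $i$ on $\HM^q$. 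For the $\B_n$ and $\F_4$ families the paper uses a different trick (Lemma \ref{stack}): re-embed $H$ through $\D_n$ resp.\ $\D_4$, whose quotient is stably parallelizable, with coadjoint-orbit fibers. Finally, non-stable-triviality of the spaces in $(3)$ is read off from the Singhof--Wemmer list rather than from a Pontryagin class computation, which you would otherwise have to carry out case by case.
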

The spaces in (1) have been classified by Hermann \cite[Thm. 5.3]{h55} up to one forgotten exceptional case 
$\E_8/(\A_5\times \A_2\times\T^1)$. The spaces in (2) were classified by
Singhof and Wemmer \cite[p. 159]{sw86}.

The precise statements are given in Theorems \ref{t1} and \ref{t2} below.

\noindent {\sc Acknowledgments}. We would like to thank Paul Gauduchon for many enlightening discussions
and for his interest in this work.

\vs 

\section{Preliminaries on compact Lie groups}

We will use throughout the text the standard notation for the compact simple Lie groups. By Cartan's classification there are four series for $n\ge 1$:
$$\A_n:=\SU(n+1),\qquad \B_n:=\Spin(2n+1),\qquad \C_n:=\Sp(n),\qquad\D_n:=\Spin(2n)$$
and five exceptional groups
$$\G_2,\ \F_4,\ \E_6,\ \E_7,\ \E_8,$$
where the subscript always indicates the rank. The attentive reader has already noticed that 
in the above list the $\D$ series should start at $n=3$ since $\D_1=\U(1)=\T^1$ and $\D_2=\A^1\times\A^1$
are not simple. By convention we take $\A_0\equiv \B_0\equiv\{1\}$ and we note the
exceptional isomorphisms $\C_1\equiv\B_1\equiv\A_1$, $\C_2\equiv\B_2$ and
$\A_3\equiv\D_3$.  

Recall first the following classical result which
describes the subgroups of maximal rank of a product of compact Lie groups:

\begin{elem}[\cite{bs49}]\label{l1} 
Let  a compact connected Lie group  $G$ be  the direct product of subgroups $G_1$ and $G_2$. If $L$
is a closed, connected subgroup of $G$ containing a maximal torus of $G$, then $L$ is the direct product of
$G_1\cap L$ and $G_2\cap L$.
\end{elem}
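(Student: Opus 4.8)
The plan is to reduce the statement to a fact about Lie algebras and then lift it back to the group level using the connectedness of $L$. Write $\gg = \gg_1 \oplus \gg_2$ for the Lie algebra of $G = G_1 \times G_2$, where $\gg_i$ is the Lie algebra of $G_i$; these are commuting ideals of $\gg$. Denote by $\mathfrak{l} \subseteq \gg$ the Lie algebra of $L$. Since $L$ contains a maximal torus of $G$, I would first choose such a torus $T \subseteq L$; because a Cartan subalgebra of a direct sum of ideals is the direct sum of Cartan subalgebras of the summands, the corresponding maximal torus splits as $T = T_1 \times T_2$ with $T_i \subseteq G_i$ maximal, and its Lie algebra as $\tt = \tt_1 \oplus \tt_2$.

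The key algebraic step is the root space decomposition. Complexifying, $\gg^{\CM} = \tt^{\CM} \oplus \bigoplus_{\alpha \in \Delta} \gg_\alpha$, where the root system $\Delta$ of $(\gg, \tt)$ is the disjoint union $\Delta_1 \sqcup \Delta_2$ of the root systems of the two factors: indeed, each root space $\gg_\alpha$ lies entirely in $\gg_1^{\CM}$ or in $\gg_2^{\CM}$, since the roots of $\gg_1$ vanish on $\tt_2$ and conversely. Because $\mathfrak{l}$ contains $\tt$ and is a subalgebra, it is stable under $\ad(\tt)$, so its complexification is a sum of the Cartan part and a subset of the root spaces: $\mathfrak{l}^{\CM} = \tt^{\CM} \oplus \bigoplus_{\alpha \in \Delta_L} \gg_\alpha$ for some $\Delta_L \subseteq \Delta$. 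Splitting $\Delta_L = (\Delta_L \cap \Delta_1) \sqcup (\Delta_L \cap \Delta_2)$ and using that each $\gg_\alpha$ sits in exactly one factor, I obtain $\mathfrak{l} = (\mathfrak{l}\cap\gg_1) \oplus (\mathfrak{l}\cap\gg_2)$, that is, $\mathfrak{l} = \mathfrak{l}_1 \oplus \mathfrak{l}_2$ with $\mathfrak{l}_i := \mathfrak{l}\cap\gg_i$ an ideal of $\mathfrak{l}$ contained in $\gg_i$.

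It remains to lift this to the group level. Set $L_i := G_i \cap L$; its identity component $(L_i)^0$ has Lie algebra $\mathfrak{l}_i$. Since $\gg_1$ and $\gg_2$ commute, so do $(L_1)^0$ and $(L_2)^0$, and the product $(L_1)^0 (L_2)^0$ is a connected subgroup whose Lie algebra is $\mathfrak{l}_1 \oplus \mathfrak{l}_2 = \mathfrak{l}$; as $L$ is connected this forces $L = (L_1)^0 (L_2)^0$. For any $g \in L_1$ write $g = ab$ with $a \in (L_1)^0 \subseteq G_1$ and $b \in (L_2)^0 \subseteq G_2$; then $b = a^{-1}g \in G_1 \cap G_2 = \{e\}$, so $g = a \in (L_1)^0$ and each $L_i$ is already connected. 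Combined with $L_1 \cap L_2 \subseteq G_1 \cap G_2 = \{e\}$ and the elementwise commuting of $L_1$ and $L_2$, this yields $L = L_1 \times L_2$, as claimed.

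The step I expect to require the most care is the passage from the Lie algebra splitting to the group decomposition: one must use the connectedness hypothesis on $L$ in an essential way (the statement is false for disconnected $L$), and verify that $G_i \cap L$ is genuinely connected rather than merely that its identity component has the right Lie algebra. The root space argument itself is routine once the maximal torus is chosen compatibly with the product structure.
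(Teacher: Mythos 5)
The paper offers no proof of this lemma at all --- it is quoted verbatim from Borel--de Siebenthal \cite{bs49} --- so there is nothing in the text to compare your argument against. Your proof is correct and is essentially the classical weight-space argument: the only points that genuinely need checking are (i) that the zero-weight space of $\gg^{\CM}$ under $\ad(\tt^{\CM})$ is exactly $\tt^{\CM}$ (true because $\tt$ is a Cartan subalgebra of the reductive algebra $\gg$, so any $\ad(\tt)$-invariant subspace containing $\tt$ is $\tt$ plus a sum of intersections with root spaces, and these intersections respect the ideal decomposition $\gg_1\oplus\gg_2$ since no nonzero root vanishes on both $\tt_1$ and $\tt_2$), and (ii) the passage back to the group, where you correctly use connectedness of $L$ to get $L=(L_1)^0(L_2)^0$ and then the directness of $G=G_1\times G_2$ to force each $L_i=G_i\cap L$ to be connected. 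Both are handled; note also that your argument does not even need the one-dimensionality of the root spaces, only that $\mathfrak{l}^{\CM}$ splits as a sum of its intersections with the weight spaces. The result is complete as written.
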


\begin{ecor}\label{c21}
If $T$ is a torus and $G$ a compact Lie group, then every connected subgroup $L$ of $T\times G$
with $\rk(L)=\rk(T\times G)$ is
of the form $T\times H$ where $H$ is a closed subgroup of $G$.
\end{ecor}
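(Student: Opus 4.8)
The plan is to derive Corollary \ref{c21} as a direct application of Lemma \ref{l1}, taking $G_1=T$ and $G_2=G$ in the notation of the lemma. The key observation is that the hypothesis $\rk(L)=\rk(T\times G)$ means precisely that $L$ contains a maximal torus of $T\times G$, so that Lemma \ref{l1} applies and yields $L=(T\cap L)\times(G\cap L)$.

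The first step is to justify that a connected subgroup of maximal rank contains a maximal torus. Since $L$ is connected and compact, it contains a maximal torus $S$ of itself, and because $\rk(L)=\rk(T\times G)$ this $S$ has the same dimension as a maximal torus of $T\times G$. As $S$ is a torus sitting inside $T\times G$ of maximal dimension, it is in fact a maximal torus of $T\times G$, which is the hypothesis needed to invoke Lemma \ref{l1}.

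The second step is to identify the two intersection factors. Applying Lemma \ref{l1} gives $L=(T\cap L)\times(G\cap L)$. Now I would set $H:=G\cap L$, which is a closed (and connected) subgroup of $G$ as required. The remaining point is to show that $T\cap L=T$, i.e.\ that the torus factor is captured entirely. This follows from the rank count: the decomposition $L=(T\cap L)\times H$ forces $\rk(L)=\rk(T\cap L)+\rk(H)$, while $\rk(L)=\rk(T)+\rk(G)$ and $\rk(H)\le\rk(G)$, $\rk(T\cap L)\le\rk(T)$; the only way equality can hold in the sum is if $\rk(T\cap L)=\rk(T)$. Since $T\cap L$ is a subtorus of $T$ of full rank, and any connected subgroup of a torus of full rank is the whole torus, we conclude $T\cap L=T$.

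The main obstacle, such as it is, lies in this last rank bookkeeping: one must be careful that the product decomposition really is a direct product of groups (so that ranks add) and that a full-rank connected subgroup of the abelian group $T$ coincides with $T$ itself. Both facts are elementary for tori, so I expect no genuine difficulty, only the need to state the rank additivity cleanly. Thus the corollary reduces to a transparent specialization of Lemma \ref{l1} together with the triviality that a connected full-rank subtorus of a torus is the entire torus.
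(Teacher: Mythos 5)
Your proof is correct and follows essentially the same route as the paper: apply Lemma \ref{l1} to get $L=(L\cap T)\times(L\cap G)$, then use additivity of ranks to force $\rk(L\cap T)=\rk(T)$, and conclude $L\cap T=T$ because a torus has no proper (closed, full-rank) subgroup of the same rank. Your additional preliminary remark that maximal rank implies $L$ contains a maximal torus of $T\times G$ is a point the paper leaves implicit, but it is correct and harmless.
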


\begin{proof} From Lemma \ref{l1} we must have $L=(L\cap T)\times (L\cap G)$.
Moreover $\rk(L)=\rk(T\times G)=\rk(T)+\rk(G)\ge \rk(L\cap T)+\rk(L\cap G)=\rk(L)$, 
so in particular $\rk(T)=\rk(L\cap T)$. On the other hand, the torus $T$ has no proper subgroup of the same rank,
showing that $L\cap T=T$.
\end{proof}

In this paper we will consider simply connected compact homogeneous spaces $M$ of non-vanishing
Euler characteristic: $\chi(M)\neq 0$. This is equivalent to the existence
of compact Lie groups $H\subset G$ with $M=G/H$ and $\rk(H)=\rk(G)$. 
Note that there are in general several pairs $(G,H)$ representing $M$, but using Lemma \ref{l1} 
one can show that there exists
a pair $(G,H)$ representing $M$, with $G$ simply connected (and thus semi-simple) and $H$ connected.
Indeed, since $G$ is compact, 
it has a finite cover $\tilde G$ which is the direct product $\tilde G=T\times G'$ where $T$ is a torus and $G'$ is
simply connected. If $\tilde H$ denotes the inverse image of $H$ in 
$\tilde G$, one has $M=\tilde G/\tilde H$. 
The exact homotopy sequence
$$0=\pi_1(M)\to\pi_0(\tilde H)\to\pi_0(\tilde G)=0$$
shows that $\tilde H$ is connected.
By Corollary \ref{c21}, the subgroup $\tilde H\subset T\times G'$ can be written $\tilde H=T\times H'$, where $H':= (\tilde H\cap G')$. 
We thus can write $M=G'/H'$ with $G'$ simply connected and $H'$ connected, as claimed. 

\begin{ere}\label{rem} The notation $M=G/H$ makes sense when the embedding of $H$ in $G$ is 
specified. More generally, if $\rho:H\to G$ is a given morphism inducing a Lie algebra embedding 
$\rho_*:\hh\hookrightarrow\gg$, we denote by a slight abuse of
notation $G/\rho(H)$ by $G/H$. The justification of this notation is that the space $G/\rho(H)$ is uniquely defined by Lie algebra embedding $\hh\hookrightarrow\gg$, 
which in most cases is fixed by the context, so there is no risk of confusion. 
The main advantage is that we do not have to compute the (discrete) kernel of $\rho$ explicitly, which is a tough task in general. 
\end{ere}
\begin{exe}
The embedding $\spin(16)\hookrightarrow\mathfrak{e}_8$ induces
a group morphism $\Spin(16) \to\E_8$ whose kernel is $\ZM_2$, generated by the volume element 
of $\Spin(16)$ ({\em cf.} \cite[Thm. 6.1]{a96}). 
With the convention from the previous remark, we will thus denote the corresponding homogeneous 
space $\mathrm{E}_8/(\mathrm{Spin}(16)/\mathbb{Z}_2)$ simply by $\E_8/\D_8$.
\end{exe}

As already mentioned in the introduction, our study leans heavily on the Borel--de Siebenthal classification of maximal proper subgroups of
simple Lie groups which we now recall:

\begin{epr}[\cite{bs49}]\label{p1}
Let $G$ be compact connected simple Lie group and let $L$ be a maximal proper connected subgroup with $\rk(L)=\rk(G)$. 
Then $G/L$ is either an irreducible inner symmetric space
or belongs to the following list:
\beq\label{borel}
\G_2/\A_2,\; \F_4/(\A_2)^2,  \; \E_6/ (\A_2)^3, \;  \E_7/(\A_5\times \A_2),  \;  \E_8/\A_8,  \;  \E_8/(\E_6\times \A_2), \; 
\E_8/(\A_4\times \A_4) .
\eeq
Each of these seven exceptional spaces admits an invariant almost complex structure.
\end{epr}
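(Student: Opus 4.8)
The plan is to reduce the statement to a classification of closed root subsystems and then invoke the extended Dynkin diagram algorithm of Borel--de Siebenthal \cite{bs49}. First I would conjugate $L$ so that it contains a fixed maximal torus $T$ of $G$; since $\rk(L)=\rk(G)$ this is possible, and then (complexifying) the Lie algebra $\mathfrak{l}$ is determined by a closed symmetric subsystem $\Phi_L$ of the root system $\Phi$ of $(\gg,\tt)$, via $\mathfrak{l}=\tt\oplus\bigoplus_{\a\in\Phi_L}\gg_\a$. Maximality of $L$ among proper connected subgroups translates into maximality of $\Phi_L$ among proper closed subsystems of full rank, so the whole problem becomes the classification of maximal closed subsystems of $\Phi$.

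Next I would split into two cases according to whether $\mathfrak{l}$ is semisimple. If $\Phi_L$ does not span $\tt^*$, then $\mathfrak{l}$ has a centre and $L=C_G(S)$ is the centralizer of a nontrivial torus $S$; maximality forces $\dim S=1$ and, writing the corresponding $\ZM$-grading $\gg=\bigoplus_k\gg_k$ by the $S$-weight, forces the grading to have only the three terms $\gg_{-1},\gg_0,\gg_1$ (otherwise the sum of the even parts would be an intermediate subalgebra). This three-term grading is exactly the data of a Hermitian symmetric structure, so $G/L$ is an inner Hermitian symmetric space. If instead $\Phi_L$ has full rank, then $\mathfrak{l}$ is semisimple, and here the Borel--de Siebenthal algorithm applies: up to the Weyl group, $\Phi_L$ is obtained by deleting from the extended Dynkin diagram $\{-\t\a,\a_1,\dots,\a_n\}$ a single node $\a_i$ whose coefficient $a_i$ in the highest root $\t\a=\sum_i a_i\a_i$ is a prime number. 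The key point, which is the classical core of \cite{bs49} and which I would simply cite, is that maximality of the subsystem is equivalent to the deleted coefficient being prime (a composite coefficient yields a proper chain of intermediate subalgebras coming from its prime factorization).

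With the algorithm in hand the proof becomes a finite inspection. Deleting a node with $a_i=2$ gives a $\ZM_2$-grading, hence an inner symmetric space; together with the Hermitian symmetric spaces from the previous paragraph this exhausts the symmetric case. It remains to list the nodes with prime coefficient $a_i\ge 3$. Reading off the highest-root coefficients, the classical groups $\A_n,\B_n,\C_n,\D_n$ have all coefficients $\le 2$ and so contribute nothing; among the exceptional groups the coefficients $3$ and $5$ occur exactly for $\G_2$, $\F_4$ and $\E_6$ (a node of mark $3$), $\E_7$ (mark $3$), and $\E_8$ (two nodes of mark $3$ and one of mark $5$). Deleting these nodes and identifying the residual diagrams, while keeping track of the fact that Weyl-conjugate deletions produce the same space, yields precisely the seven pairs in \eqref{borel}.

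Finally, for the invariant almost complex structure I would use that each of these seven spaces carries a $\ZM_p$-grading $\gg=\bigoplus_{k\in\ZM_p}\gg_k$ with $p\in\{3,5\}$ odd and $\gg_0=\mathfrak{l}$. Since $p$ is odd there is no middle term, so the isotropy representation $\mm=\bigoplus_{k\ne 0}\gg_k$ splits canonically into the sum of the parts with $k\in\{1,\dots,(p-1)/2\}$ and its conjugate; declaring the first summand to be the $+\i$-eigenspace defines an $\Ad(L)$-invariant complex structure on $\mm$, hence an invariant almost complex structure on $G/L$. The main obstacle is the Borel--de Siebenthal step itself, namely the equivalence between maximality and primality of the deleted coefficient; but since the proposition is attributed to \cite{bs49} this part is quoted rather than reproved, and the genuine work left is the bookkeeping of the marks and the construction of the almost complex structures just described.
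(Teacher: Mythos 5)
The paper offers no proof of this proposition at all: it is quoted from Borel--de Siebenthal \cite{bs49} (and the existence of invariant almost complex structures on the seven exceptional quotients is later attributed to Borel--Hirzebruch \cite[p.~500]{bh58}), so there is no in-paper argument to measure yours against. Your sketch is a correct outline of the standard proof, and the bookkeeping of the marks is right: the classical diagrams have all marks $\le 2$, the mark-$3$ deletions give $\G_2/\A_2$, $\F_4/(\A_2)^2$, $\E_6/(\A_2)^3$, $\E_7/(\A_5\times\A_2)$, $\E_8/\A_8$ and $\E_8/(\E_6\times\A_2)$, and the unique mark-$5$ deletion gives $\E_8/(\A_4\times\A_4)$; the $\ZM_p$-grading construction of the invariant almost complex structure for odd $p$ is the classical one and needs no integrability, only an $\Ad(L)$-invariant splitting $\mm\otimes\CM=\mm^{1,0}\oplus\overline{\mm^{1,0}}$. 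One step is compressed enough to deserve a warning: in the non-semisimple case, the ``sum of the even parts'' is an intermediate subalgebra only if both $\gg_{\pm1}$ and some $\gg_{\pm k}$ with $k$ even and nonzero are present, and this is not automatic for an arbitrary circle $S$. You first need maximality to force $S$ to be generated by a single fundamental coweight $\varpi_i^\vee$ (if two simple roots were nonconstant on $S$, the centralizer of the sub-circle generated by one of the corresponding fundamental coweights would be a proper intermediate subgroup), and then use that the resulting $\ZM$-grading by the coefficient $n_i$ has every degree $0\le k\le a_i$ occupied (saturation of root strings descending from the highest root); only then does $a_i\ge2$ produce a proper strictly intermediate even part, forcing $a_i=1$ and the three-term Hermitian grading. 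With that repair, and granting the primality criterion you explicitly quote from \cite{bs49}, the argument is sound.
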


Using the classification of irreducible inner symmetric spaces (\cite[pp. 312--314]{besse}) and Proposition \ref{p1} we immediately get (keeping in mind
Remark \ref{rem}):

\begin{ecor}\label{c1} Let $H$ be a rank $n$ maximal proper subgroup of $G$. 

a) If $G=\A_n$, $H$ is conjugate to some subgroup $\A_p\times \A_q\times \T^1$ with $p+q=n-1$ and $p,q\ge 0$ (recall that $\A_0=\{1\}$
by convention).

b) If $G=\B_n$, $H$ is conjugate to some subgroup $\B_p\times \D_q$ with $p+q=n$, $p\ge 0$ and $q\ge 1$, diagonally embedded in $\B_n$.

c) If $G=\C_n$, $H$ is either conjugate to  $\U(n)=\A_{n-1}\times \T^1$ or to some subgroup $\C_p\times \C_q$ with $p+q=n$, and $p,q\ge 1$, diagonally embedded in $\C_n$.

d) If $G=\D_n$, $H$ is conjugate to either conjugate to  $\U(n)=\A_{n-1}\times \T^1$ or to some subgroup $\D_p\times \D_q$ with $p+q=n$, and $p,q\ge 1$, diagonally embedded in $\D_n$.
\end{ecor}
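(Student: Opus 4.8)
The plan is to derive the statement directly from Proposition \ref{p1} and the standard classification of irreducible inner symmetric spaces of classical type. The first and decisive observation is that when $G$ is one of the classical groups $\A_n,\B_n,\C_n,\D_n$, none of the seven exceptional quotients in the Borel--de Siebenthal list (\ref{borel}) can occur, since in each of them $G$ is an exceptional group $\G_2,\F_4,\E_6,\E_7$ or $\E_8$. Hence, for a maximal proper connected subgroup $H$ of maximal rank, Proposition \ref{p1} forces $G/H$ to be an irreducible inner symmetric space, and the task reduces to listing these spaces for each classical $G$ and translating the isotropy group into the $\A,\B,\C,\D,\T$ notation, using $\su(m)\oplus\RM\cong\A_{m-1}\oplus\T^1$, $\so(2a+1)\cong\B_a$, $\so(2b)\cong\D_b$ and $\sp(m)\cong\C_m$, together with the conventions $\A_0=\B_0=\{1\}$ and $\D_1=\T^1$.

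I would then proceed group by group. For $G=\A_n=\SU(n+1)$ the only equal-rank (inner) symmetric quotients are the complex Grassmannians $\SU(n+1)/\mathrm{S}(\U(p+1)\times\U(q+1))$ with $(p+1)+(q+1)=n+1$, whose isotropy algebra $\su(p+1)\oplus\su(q+1)\oplus\RM$ yields $H\cong\A_p\times\A_q\times\T^1$ with $p+q=n-1$; the spaces $\SU(n+1)/\SO(n+1)$ and $\SU(2m)/\Sp(m)$ are automatically discarded, being outer, i.e. satisfying $\rk H<\rk G$. For $G=\C_n=\Sp(n)$ the inner symmetric quotients are $\Sp(n)/\U(n)$, with isotropy $\u(n)\cong\A_{n-1}\oplus\T^1$, and the quaternionic Grassmannians $\Sp(n)/(\Sp(p)\times\Sp(q))$ with $p+q=n$, with isotropy $\C_p\oplus\C_q$; these give c). In all cases the relevant embedding is the obvious block-diagonal one, which is the meaning of ``diagonally embedded''.

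The orthogonal cases require the one step that demands genuine care, namely a parity analysis. For a real Grassmannian $\SO(m)/(\SO(a)\times\SO(b))$ with $a+b=m$ one has $\rk\SO(m)=\lfloor m/2\rfloor$ and $\rk(\SO(a)\times\SO(b))=\lfloor a/2\rfloor+\lfloor b/2\rfloor$, and these agree precisely when at most one of $a,b$ is odd; a configuration with two odd blocks lowers the rank by one and is therefore excluded by the equal-rank hypothesis. For $G=\B_n=\Spin(2n+1)$ we have $a+b=2n+1$, so exactly one block is odd, giving $\SO(2n+1)/(\SO(2p+1)\times\SO(2q))$ with $p+q=n$ and isotropy $\so(2p+1)\oplus\so(2q)\cong\B_p\oplus\D_q$; properness excludes $q=0$, whence $p\ge0$, $q\ge1$, which is b) (the endpoint $p=0$ being the sphere $\Spin(2n+1)/\Spin(2n)$). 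For $G=\D_n=\Spin(2n)$ we have $a+b=2n$ and the equal-rank constraint forces both blocks even, giving $\SO(2n)/(\SO(2p)\times\SO(2q))$ with isotropy $\D_p\oplus\D_q$, together with the Hermitian case $\SO(2n)/\U(n)$ with isotropy $\A_{n-1}\oplus\T^1$; these are the two alternatives in d).

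The only genuinely delicate point, as opposed to routine translation, is precisely this parity-and-endpoint bookkeeping for the orthogonal groups: one must verify that the equal-rank hypothesis discards every two-odd-block Grassmannian and every outer symmetric space, and that the degenerate endpoints under the conventions $\A_0=\B_0=\{1\}$ and $\D_1=\T^1$ are placed correctly, so that spheres appear and improper cases such as $q=0$ are thrown out. The remainder is a direct reading of the classification table of irreducible inner symmetric spaces.
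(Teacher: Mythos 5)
Your proposal is correct and follows essentially the same route as the paper, which simply asserts that the corollary follows ``immediately'' from Proposition \ref{p1} and the Besse tables of irreducible inner symmetric spaces; you have merely written out the details (exclusion of the exceptional Borel--de Siebenthal quotients for classical $G$, discarding of outer symmetric spaces, and the parity bookkeeping for the orthogonal series) that the authors leave implicit.
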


For later use, let us note the following consequence of Lemma \ref{l1} and Corollary \ref{c1}:

\begin{elem}\label{l3}
The groups $\A_{n_1}\times\ldots\times \A_{n_k}$ contain no semi-simple proper subgroups of rank $n_1+\ldots+n_k$.
\end{elem}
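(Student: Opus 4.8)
The plan is to reduce the statement to the individual factors via Lemma~\ref{l1}, and then to show that a maximal-rank connected subgroup of a single $\A_n$ can be semisimple only if it is the whole group.

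First I would observe that it suffices to treat \emph{connected} subgroups. Indeed, if $L\subseteq G:=\A_{n_1}\times\ldots\times\A_{n_k}$ is semisimple of rank $n_1+\ldots+n_k$, then its identity component $L^0$ has the same Lie algebra, hence is connected, semisimple and of maximal rank, and $L^0=G$ would force $L=G$. After conjugation we may assume $L^0$ contains the fixed maximal torus of $G$. Writing $G=\A_{n_1}\times(\A_{n_2}\times\ldots\times\A_{n_k})$ and applying Lemma~\ref{l1} repeatedly, I obtain a decomposition $L^0=L_1\times\ldots\times L_k$ with $L_i:=L^0\cap\A_{n_i}$ a connected subgroup of $\A_{n_i}$. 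Comparing ranks, $\sum_i\rk(L_i)=\sum_i n_i$ while $\rk(L_i)\le n_i$ for each $i$, so in fact $\rk(L_i)=n_i$, i.e.\ each $L_i$ is a maximal-rank connected subgroup of $\A_{n_i}$.

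The key step is to prove that every proper maximal-rank connected subgroup $L_i\subsetneq\A_{n_i}$ carries a central $\T^1$ factor, and hence is not semisimple. To see this I would embed $L_i$ into a maximal one: since $L_i$ is proper and connected, it lies in some maximal proper connected subgroup $M_i$ of $\A_{n_i}$, and because $\rk(L_i)=n_i$ we also have $\rk(M_i)=n_i$. By Corollary~\ref{c1}a), $M_i$ is conjugate to $\A_p\times\A_q\times\T^1$ with $p+q=n_i-1$; in particular $M_i=\T^1\times(\A_p\times\A_q)$ has a torus direct factor. Applying Corollary~\ref{c21} to the maximal-rank connected subgroup $L_i\subseteq M_i$, I conclude $L_i=\T^1\times H$ for some closed $H\subseteq\A_p\times\A_q$. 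Thus the Lie algebra $\mathfrak{l}_i$ contains the one-dimensional center coming from the $\T^1$ factor, so $L_i$ is not semisimple.

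Finally, since $L^0$ is semisimple, so is each factor $L_i$ (a direct product of compact connected groups is semisimple exactly when every factor is). By the previous step no $L_i$ can be a proper subgroup of $\A_{n_i}$, whence $L_i=\A_{n_i}$ for all $i$ and therefore $L^0=G$, so $L=G$ is not proper, as claimed. The main obstacle is the key step, namely detecting that semisimplicity fails for every proper maximal-rank subgroup of $\A_n$; the crucial input is that, by Corollary~\ref{c1}a), every maximal proper maximal-rank subgroup of $\A_n$ has a $\T^1$ factor, which via Corollary~\ref{c21} propagates to any maximal-rank subgroup sitting inside it.
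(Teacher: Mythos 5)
Your proposal is correct and follows essentially the same route as the paper: reduce to a single factor $\A_n$ via Lemma~\ref{l1}, pass to a maximal proper subgroup of full rank, and use Corollary~\ref{c1}~a) together with the product decomposition (Corollary~\ref{c21}) to force a central $\T^1$ inside the subgroup, contradicting semi-simplicity. The only differences are cosmetic: you make the reduction to connected subgroups explicit and phrase the contradiction as ``$L_i$ contains the $\T^1$ factor'' rather than ``the maximal subgroup would have to be semi-simple.''
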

\begin{proof}
From Lemma \ref{l1}, it suffices to prove the statement for $k=1$.
Assume for a contradiction that $H\subset \A_n$ is a proper semi-simple subgroup of rank $n$, and let $G\subset \A_n$ be a maximal 
proper subgroup of $\A_n$ containing $H$. By Lemma \ref{l1} again, $G$ is semi-simple, thus contradicting Corollary \ref{c1} a).
\end{proof}

An easy induction using Corollary \ref{c1} shows  that we can actually describe all closed subgroups of maximal rank of the classical groups. 
It turns out that for the $\A$ series it is more convenient to state the result for the subgroups of $\U(n)$ rather than for those of $\A_n$:

\begin{elem}\label{class1}
Let $H$ be a compact Lie group of rank $n$.

a) If $H$ is a subgroup of $\U(n)$, there exist $k\ge 1$ integers $n_i\ge 1$ with $n_1+\ldots+n_k=n$
such that $H$ is conjugate to $\U({n_1})\times\ldots\times \U({n_k})$, diagonally embedded in $\U(n)$. 

b) If $H$ is a subgroup of $\B_n$, there exist integers $m,k,l\ge 0$, $p_i\ge 1$ for $1\le i\le k$,  $q_i\ge 1$ for $1\le i\le l$,
 with $m+p_1+\ldots+p_k+q_1+\ldots+q_l=n$
such that $H$ is conjugate to $\B_{m}\times\D_{p_1}\times\ldots\times \D_{p_k}\times \U(q_1)\times\ldots\times \U(q_l)$ 
embedded in $\B_n$ as
$$\B_{m}\times\D_{p_1}\times\ldots\times \D_{p_k}\times \U(q_1)\times\ldots\times \U(q_l)\subset \B_{m}\times\D_{p_1}\times\ldots\times \D_{p_k}\times \D_{q_1}\times\ldots\times \D_{q_l}\subset \B_n$$
(each unitary group is embedded in the corresponding special orthogonal group via the standard embedding $\U(q_i)\subset \SO(2q_i)$ and
the product is diagonally embedded in $\B_n$).

c) If $H$ is a subgroup of $\C_n$, there exist integers $k,l\ge 0$, $p_i\ge 1$ for $1\le i\le k$,  $q_i\ge 1$ for $1\le i\le l$,
 with $p_1+\ldots+p_k+q_1+\ldots+q_l=n$
such that $H$ is conjugate to $\C_{p_1}\times\ldots\times \C_{p_k}\times \U(q_1)\times\ldots\times \U(q_l)$ 
embedded in $\C_n$ as
$$\C_{p_1}\times\ldots\times \C_{p_k}\times \U(q_1)\times\ldots\times \U(q_l)\subset \C_{p_1}\times\ldots\times \C_{p_k}\times \C_{q_1}\times\ldots\times \C_{q_l}\subset \C_n$$
(each unitary group is embedded in the corresponding symplectic group via the standard embedding $\U(q_i)\subset \Sp(q_i)$ and
the product is diagonally embedded in $\C_n$).

d) If $H$ is a subgroup of $\D_n$, there exist integers $k,l\ge 0$, $p_i\ge 1$ for $1\le i\le k$,  $q_i\ge 1$ for $1\le i\le l$,
 with $p_1+\ldots+p_k+q_1+\ldots+q_l=n$
such that $H$ is conjugate to $\D_{p_1}\times\ldots\times \D_{p_k}\times \U(q_1)\times\ldots\times \U(q_l)$ 
embedded in $\D_n$ as
$$\D_{p_1}\times\ldots\times \D_{p_k}\times \U(q_1)\times\ldots\times \U(q_l)\subset \D_{p_1}\times\ldots\times \D_{p_k}\times \D_{q_1}\times\ldots\times \D_{q_l}\subset \B_n$$
(each unitary group is embedded in the corresponding special orthogonal group via the standard embedding $\U(q_i)\subset \SO(2q_i)$ and
the product is diagonally embedded in $\D_n$).
\end{elem}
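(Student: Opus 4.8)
The plan is to prove the four statements simultaneously by induction on the rank $n$, establishing them in the order (a), then (c) and (d), then (b); throughout I take $H$ connected, so that (containing a maximal torus of the ambient group) it is determined up to conjugacy by its Lie algebra. The inductive step is uniform: if $H$ is the whole ambient group we are done, the decomposition having a single factor; otherwise $H$ sits in a maximal proper connected subgroup $L$ of the same rank, and Lemma \ref{l1} splits $H$ as the product of its intersections with the factors of $L$, each intersection having maximal rank in its factor, to which the induction hypothesis applies. The only input that changes from case to case is the identity of $L$, supplied by Corollary \ref{c1}.

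First I would settle (a). Since $\U(n)$ is not simple, Corollary \ref{c1} does not apply to it directly, so the first task is to find its maximal proper connected subgroups of maximal rank. Passing to the cover $\T^1\times\SU(n)\to\U(n)$ and applying Corollary \ref{c21}, every maximal-rank connected subgroup of $\U(n)$ corresponds to $\T^1\times H'$ with $H'$ of maximal rank in $\SU(n)=\A_{n-1}$; Corollary \ref{c1}a) then shows that the maximal proper such $H'$ is $\A_{a-1}\times\A_{b-1}\times\T^1$ with $a+b=n$, so that $L=\U(a)\times\U(b)$ block-embedded. The uniform step now yields (a).

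Parts (c) and (d) are formally identical and use (a) at the same rank. For $\C_n$ (resp. $\D_n$), Corollary \ref{c1}c) (resp. d)) says $L$ is either $\C_p\times\C_q$ (resp. $\D_p\times\D_q$) with $p,q\ge 1$, where the induction hypothesis for $\C$ (resp. $\D$) at smaller rank finishes the argument, or $L=\U(n)$, where part (a) writes $H$ as a product of unitary groups. The one thing to verify is that the block embedding $\U(n_1)\times\ldots\times\U(n_k)\subset\U(n)$ followed by the standard embedding $\U(n)\subset\Sp(n)$ (resp. $\U(n)\subset\SO(2n)$) is the diagonal embedding through $\U(n_i)\subset\Sp(n_i)$ (resp. $\U(n_i)\subset\SO(2n_i)$) required by the statement; this is immediate from the compatibility of the splitting $\CM^n=\bigoplus_i\CM^{n_i}$ with $\CM^n\subset\HM^n$ (resp. $\CM^n=\RM^{2n}$). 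Finally, for (b), Corollary \ref{c1}b) gives $L=\B_p\times\D_q$ with $p+q=n$ and $q\ge 1$; applying the induction hypothesis for (b) to $H\cap\B_p$ and part (d) (at rank $q\le n$, already available) to $H\cap\D_q$ produces the stated form. Since subgroups of $\D_q$ contribute no $\B$-factor and $H\cap\B_p$ contributes at most one by induction, the decomposition has exactly one $\B_m$ block, as required.

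I expect the genuinely non-mechanical points to be two. The first is the identification of the maximal subgroups of the non-simple group $\U(n)$ in the first step, which is the only place where Corollary \ref{c1} must be supplemented (by Corollary \ref{c21}). The second, and the one demanding the most care, is the bookkeeping of embeddings: keeping track of the successive conjugations and checking that the nested standard inclusions $\U(q_i)\subset\SO(2q_i)$ (resp. $\subset\Sp(q_i)$) compose to exactly the diagonal embeddings asserted in (b)--(d), rather than merely to some maximal-rank subgroup of the right isomorphism type. Everything else reduces, step by step, to the completeness of Corollary \ref{c1}, so no further combinatorial input is needed.
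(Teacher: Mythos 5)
Your proposal is correct and follows exactly the route the paper intends: the paper offers no proof beyond the remark that the lemma follows by ``an easy induction using Corollary~\ref{c1}'', and your write-up is a faithful execution of that induction, with the two genuinely non-automatic points (identifying the maximal equal-rank subgroups of the non-simple group $\U(n)$ via Corollary~\ref{c21}, and checking that the composed standard embeddings are the diagonal ones asserted) correctly identified and handled. Nothing further is needed.
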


\vs

\section{Weakly complex structures on homogeneous spaces}

We are now ready to attack our main problem: the classification of simply connected compact equal rank homogeneous 
spaces whose tangent bundle is weakly complex. 

Since we are interested in almost complex structures, it is perhaps an appropriate place to
recall that if $H$ is the centralizer of a torus in $G$, then $\rk(H)=\rk(G)$ and 
$G/H$ automatically carries an invariant complex structure (see {\em e.g.} \cite[Sect. 7]{wang}). 
We will thus focus on homogeneous spaces $G/H$ where $H$ {\em is not} centralizer of any torus in $G$. Hermann has shown 
that in this case, with a few exceptions, $G/H$ does not carry any {\em invariant} almost complex structure (\cite[Thm. 5.3]{h55}). Of course,
his arguments being purely algebraic, he does not say anything about the possible existence of {\em non-invariant} almost complex
or, more generally, weakly complex structures. 

Let us start with some simple but important remarks on the behavior of 
weakly complex vector bundles on differentiable manifolds.

\begin{elem}\label{alt}
A real vector bundle $\tau$ on a compact manifold is weakly complex if and only if there exist a complex bundle $\gamma$
and a trivial bundle $\delta$ such that 
\be\label{tau}\tau\oplus\gamma=\delta.\ee
\end{elem}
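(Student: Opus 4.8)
The plan is to establish the two implications separately, both resting on one standard fact about bundles over a compact base: every complex vector bundle $E$ admits a complementary complex bundle $E'$ such that $E\oplus E'$ is a \emph{trivial} complex bundle. One gets this by equipping $E$ with a Hermitian metric and a complex\nobreakdash-linear embedding $E\hookrightarrow\underline{\CM}^N$ (which exists since the base is compact) and taking $E'$ to be the orthogonal complement; then $E\oplus E'\cong\underline{\CM}^N$, hence $\cong\underline{\RM}^{2N}$ as a real bundle. I shall also use repeatedly that a trivial real bundle of even rank carries a complex structure, being isomorphic to $\underline{\CM}^{k}$.

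For the direct implication, suppose $\tau$ is weakly complex, so that $E:=\tau\oplus\epsilon$ admits a complex structure for some trivial $\epsilon$. Applying the fact above to the complex bundle $E$ yields a complex bundle $E'$ with $\tau\oplus\epsilon\oplus E'\cong\underline{\RM}^{2N}=:\delta$. Setting $\gamma:=\epsilon\oplus E'$ gives $\tau\oplus\gamma=\delta$ with $\delta$ trivial, and it only remains to see that $\gamma$ is complex. As $E'$ is complex, this holds as soon as $\epsilon$ has even rank; if $\epsilon$ has odd rank I add one trivial real line to both sides of $\tau\oplus\gamma=\delta$ and absorb it into $\epsilon$, so that $\epsilon\oplus\underline{\RM}$ now has even rank (hence a complex structure) while $\delta\oplus\underline{\RM}$ is still trivial. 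This produces the data required by (\ref{tau}).

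For the converse, assume (\ref{tau}) holds: $\tau\oplus\gamma=\delta$ with $\gamma$ complex and $\delta$ trivial. Choose, as above, a complex $\gamma'$ with $\gamma\oplus\gamma'\cong\underline{\RM}^{2k}$. Adding $\gamma'$ to both sides gives $\tau\oplus\underline{\RM}^{2k}=\tau\oplus\gamma\oplus\gamma'=\delta\oplus\gamma'$. The right\nobreakdash-hand side is the sum of a trivial bundle and a complex bundle, so it carries a complex structure once $\delta$ has even rank; the same one\nobreakdash-line stabilization as before fixes the parity. Thus $\tau\oplus\epsilon$ is complex for a suitable trivial $\epsilon$, i.e. $\tau$ is weakly complex.

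The only genuine obstacle is thus the existence of the complementary complex bundle, which is exactly where compactness of the base enters; everything else is formal manipulation of $\oplus$. The remaining nuisance is the parity of the trivial summands—a trivial real bundle of odd rank admits no complex structure—but this is dealt with uniformly by the single trivial line added to both sides, which alters neither the triviality of $\delta$ nor, after absorption into the even\nobreakdash-rank trivial part, the existence of the complex structure.
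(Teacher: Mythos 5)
Your proof is correct and takes essentially the same route as the paper's: both implications rest on the existence, over a compact base, of a complementary complex bundle making the sum trivial, followed by the same parity fix of adding a trivial real line when the trivial summand has odd rank. The paper dismisses the converse as ``similar''; your explicit version is exactly the intended argument.
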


\begin{proof}
Assume that $\tau$ is weakly complex, so there 
exist a complex bundle $\lambda$ and a trivial bundle $\epsilon$ such that $\tau\oplus\epsilon=\lambda$. Recall that for every complex vector bundle $\lambda$ on a compact manifold, there exists a complex vector bundle 
$\tilde\lambda$ such that $\lambda\oplus\tilde\lambda$ is trivial. We thus get
that $\tau\oplus(\tilde\lambda\oplus\epsilon)=\lambda\oplus\tilde\lambda$ is trivial. The relation
\eqref{tau} is thus satisfied for $\gamma:=\tilde\lambda\oplus\epsilon$ and $\delta:=\lambda\oplus\tilde\lambda$ if 
the rank of $\epsilon$ is even, and for $\gamma:=\tilde\lambda\oplus(\epsilon\oplus\RM)$ and $\delta:=(\lambda\oplus\tilde\lambda)\oplus\RM$ if 
the rank of $\epsilon$ is odd. The proof of the converse statement is similar.
\end{proof}

\begin{elem}\label{prod}
A product $M:=M_1\times M_2$ is weakly complex if and only if each factor is weakly complex.
\end{elem}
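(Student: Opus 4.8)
The plan is to exploit the canonical splitting of the tangent bundle of a product, $T(M_1\times M_2)\cong \pi_1^*TM_1\oplus\pi_2^*TM_2$, where $\pi_i$ denotes the projection onto the $i$-th factor, together with two elementary closure properties of weakly complex bundles that follow directly from Lemma \ref{alt}. First, weak complexity is preserved under pullback: if $\tau\oplus\epsilon$ is complex for some trivial $\epsilon$ and $f$ is any smooth map, then $f^*\tau\oplus f^*\epsilon=f^*(\tau\oplus\epsilon)$ is complex and $f^*\epsilon$ is trivial, so $f^*\tau$ is weakly complex. Second, weak complexity is a stable property: the Whitney sum of two weakly complex bundles is weakly complex (one simply adds the two trivial summands), and conversely $\tau\oplus\epsilon$ weakly complex with $\epsilon$ trivial forces $\tau$ weakly complex, since if $(\tau\oplus\epsilon)\oplus\epsilon'$ is complex then so is $\tau\oplus(\epsilon\oplus\epsilon')$, with $\epsilon\oplus\epsilon'$ trivial.

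For the implication that each factor being weakly complex makes the product weakly complex, I would note that $\pi_1^*TM_1$ and $\pi_2^*TM_2$ are weakly complex by the pullback property, and then conclude that their Whitney sum $T(M_1\times M_2)$ is weakly complex by the sum property.

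For the converse, I would fix a base point $m_2\in M_2$ and consider the slice embedding $\iota\colon M_1\to M_1\times M_2$, $x\mapsto(x,m_2)$. Pulling back the splitting along $\iota$ and using that $\pi_2\circ\iota$ is constant, one gets $\iota^*T(M_1\times M_2)\cong TM_1\oplus\underline{\RM}^{\,\dim M_2}$, the trivial summand appearing because the pullback of any bundle by a constant map is trivial. Since the product is weakly complex, the pullback $\iota^*T(M_1\times M_2)$ is weakly complex by the pullback property, and hence $TM_1$ is weakly complex by the stability property. Exchanging the roles of the two factors yields the same conclusion for $M_2$.

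I expect no serious obstacle here; the only point requiring a little care is the bookkeeping of trivial summands and their parities, where one might worry about whether a trivial bundle of odd real rank carries a complex structure. Phrasing everything through Lemma \ref{alt} and allowing arbitrary trivial summands avoids this entirely: at no stage do I need a trivial bundle itself to be complex, only that a sum of trivial bundles is trivial, so the argument goes through uniformly regardless of the dimensions of $M_1$ and $M_2$.
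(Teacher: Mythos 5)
Your argument is correct and follows essentially the same route as the paper's proof: the splitting $T(M_1\times M_2)\cong \pi_1^*TM_1\oplus\pi_2^*TM_2$, pullback along the projections for one direction, and restriction to a slice $M_1\times\{m_2\}$ (whose normal bundle is trivial) for the other. The bookkeeping of trivial summands via Lemma \ref{alt} is exactly the care the paper also takes, so nothing is missing.
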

\begin{proof}
Let $p_i$ denote the standard projection $M\to M_i$.
If $M_i$ is weakly complex, there exists a trivial bundle $\epsilon_i$ over $M_i$ such that
$\T M_i\oplus\epsilon_i$ has a complex structure. Then $\tilde{\epsilon}_i:=p_i^*(\epsilon_i)$ are trivial bundles over
$M$ and $\T M\oplus\tilde{\epsilon}_1\oplus\tilde{\epsilon}_2=p_1^*(\T M_1\oplus\epsilon_1)\oplus p_2^*(\T M_2\oplus\epsilon_2)$
is a complex bundle.

Conversely, if there exists a trivial bundle $\epsilon$ such that $\T M\oplus\epsilon$ is complex, then 
the restriction of $\T M\oplus\epsilon$ to $M_1\times\{m_2\}$ is a complex bundle for each $m_2\in M_2$. On the other hand,
this restriction is stably isomorphic to $\T M_1$ since $\T M|_{M_1\times\{m_2\}}$ is the direct sum
of $\T M_1$ and a trivial bundle of rank $\dim(M_2)$. Thus $M_1$ is weakly complex, and similarly,
$M_2$ is weakly complex too.
\end{proof}

We now state two results, which basically say that if 
the total space of a homogeneous fibration carries an invariant almost
complex structure or has weakly complex tangent bundle, then the same holds for the fibers.

\begin{elem}\cite[Prop. 5.3]{h55}\label{lh}
Let $G$ be a compact connected Lie group, $L$ be a connected subgroup containing a maximal torus of $G$, contained in a chain of subgroups 
$L \subset L' \subset \ldots \subset  L^m \subset G$, 
with $L$ a maximal subgroup of $L', \ldots, L^m$ a maximal subgroup of $G$. If $G/L$ admits an invariant almost complex structure, so does $L'/L$.
\end{elem}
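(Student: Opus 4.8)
The plan is to translate the existence of an invariant almost complex structure into combinatorial data on the root system, exploiting the fact that the hypothesis forces $L$ (and hence $L'$) to contain a maximal torus $T$ of $G$, with Lie algebra $\tt$. Since $\rk(L)=\rk(G)$, all the isotropy modules in sight split into one-dimensional root spaces on which any invariant complex structure must act diagonally, and this is what makes the restriction to the fiber work.

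First I would fix an $\Ad(G)$-invariant scalar product on $\gg$ and set up the associated reductive decompositions $\gg=\mathfrak{l}\oplus\mm$, $\mathfrak{l}'=\mathfrak{l}\oplus\mm_1$ and $\gg=\mathfrak{l}'\oplus\mm_2$, each complement being the orthogonal one. This yields an $\Ad(L)$-invariant splitting $\mm=\mm_1\oplus\mm_2$, in which $\mm_1\cong\mathfrak{l}'/\mathfrak{l}$ is exactly the isotropy module of the fiber $L'/L$, while $\mm$ is the isotropy module of $G/L$. In these terms an invariant almost complex structure on $G/L$ is an $\Ad(L)$-invariant endomorphism $J\in\End(\mm)$ with $J^2=-\id$, and what must be produced is an $\Ad(L)$-invariant $J_1\in\End(\mm_1)$ with $J_1^2=-\id$.

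The key step is the passage to root spaces. Complexifying and writing $\mm^{\CM}=\bigoplus_{\a\in R_{\mm}}\gg_\a$, where $R_{\mm}=R\setminus R_{\mathfrak{l}}$ and $R,R_{\mathfrak{l}},R_{\mathfrak{l}'}$ are the root systems of $\gg^{\CM},\mathfrak{l}^{\CM},(\mathfrak{l}')^{\CM}$, I observe that $J$ commutes with $\Ad(T)$, so it preserves each one-dimensional weight space $\gg_\a$ and acts there as multiplication by $\pm\i$. Let $W\subset\mm^{\CM}$ be its $(+\i)$-eigenspace, with index set $R^+_{\mm}$. Reality of $J$ forces exactly one of $\a,-\a$ to lie in $R^+_{\mm}$ for each $\a\in R_{\mm}$, and $\Ad(L)$-invariance of $J$ is equivalent to $R^+_{\mm}$ being stable under adding roots of $R_{\mathfrak{l}}$ (the sum remaining in $R_{\mm}$ because $\mm$ is $\Ad(L)$-invariant). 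I would then restrict this data to the fiber: setting $R_{\mm_1}:=R_{\mathfrak{l}'}\setminus R_{\mathfrak{l}}$, the roots spanning $\mm_1^{\CM}$, and $R^+_{\mm_1}:=R^+_{\mm}\cap R_{\mm_1}$, the symmetry of $R_{\mm_1}$ shows that exactly one of $\a,-\a$ lies in $R^+_{\mm_1}$, so $W_1:=W\cap\mm_1^{\CM}$ satisfies $W_1\oplus\bar W_1=\mm_1^{\CM}$ and is the $(+\i)$-eigenspace of a complex structure $J_1$ on $\mm_1$. Since $\mm_1$ is $\Ad(L)$-invariant, equivalently $R_{\mm_1}$ is stable under adding $R_{\mathfrak{l}}$-roots, the stability of $R^+_{\mm}$ descends to $R^+_{\mm_1}$, so $J_1$ is $\Ad(L)$-invariant and is the desired invariant almost complex structure on $L'/L$.

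The only genuinely substantive point, and hence where I would be most careful, is the compatibility used in the key step: that the eigenspace $W$ respects the decomposition $\mm^{\CM}=\mm_1^{\CM}\oplus\mm_2^{\CM}$. This is exactly what the inclusion $\tt\subset\mathfrak{l}$ provides, since it forces $W$ to be a sum of root spaces; without it the intersection $W\cap\mm_1^{\CM}$ need not have the correct dimension, and the argument would break down. I would also remark that the maximality of the subgroups in the chain $L\subset L'\subset\ldots\subset G$ is irrelevant to this particular implication: only the inclusions $\tt\subset\mathfrak{l}\subset\mathfrak{l}'\subset\gg$ enter the proof.
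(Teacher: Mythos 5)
The paper gives no proof of this lemma --- it is quoted directly from Hermann \cite[Prop.~5.3]{h55} --- so there is nothing internal to compare against; your blind proof is correct and is essentially the standard (and Hermann's own) argument. The decisive point, that $\tt\subset\mathfrak{l}$ forces any invariant $J$ to preserve each one-dimensional root space of $\mm^{\CM}$ and hence the conjugation-stable sub-sum $\mm_1^{\CM}$ spanned by the roots of $\mathfrak{l}'$ not in $\mathfrak{l}$, so that $J$ simply restricts to an $\Ad(L)$-invariant complex structure on the fiber, is exactly right, as is your closing observation that only the inclusions $\tt\subset\mathfrak{l}\subset\mathfrak{l}'\subset\gg$ are used and the maximality of the chain plays no role in this implication.
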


\begin{elem}\label{l2}
Let $G$ be a compact Lie group with closed subgroups $H$ and $H'$, such that
$H \subset H' \subset G$. If the total space of the fibration $G/H \rightarrow G/H'$ 
is weakly complex, then the same holds for the fiber $H'/H$.
\end{elem}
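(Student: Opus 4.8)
The plan is to mimic the proof of Lemma \ref{prod}, replacing the product structure by the fibration structure. The projection $\pi\colon G/H\to G/H'$, $gH\mapsto gH'$, is a smooth fiber bundle, and its fiber over the base point $eH'$ is precisely the image of $H'$, namely $H'/H$. Thus $H'/H$ sits inside $G/H$ as an embedded submanifold, and the strategy is to restrict a global weakly complex structure on $\T(G/H)$ to this fiber and to identify the outcome with a stabilization of $\T(H'/H)$.

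The key geometric input is that the restriction $\T(G/H)\big|_{H'/H}$ is stably isomorphic to $\T(H'/H)$; more precisely, I claim
\[
\T(G/H)\big|_{H'/H}\;\cong\;\T(H'/H)\oplus\delta,
\]
where $\delta$ is a trivial bundle of rank $\dim(G/H')$. Indeed, the vertical subbundle $\ker(d\pi)$ restricts along the fiber to $\T(H'/H)$, while the quotient $\T(G/H)\big|_{H'/H}\big/\T(H'/H)$ is the normal bundle of the fiber. By local triviality of $\pi$, a tubular neighborhood of $H'/H$ in $G/H$ is diffeomorphic to $(H'/H)\times V$ for a small ball $V\subset \T_{eH'}(G/H')$, so this normal bundle is the pullback of the vector space $\T_{eH'}(G/H')$ under the constant map $H'/H\to\{eH'\}$, hence trivial. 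Choosing any metric splits the short exact sequence and yields the displayed decomposition.

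With this in hand the conclusion is immediate. Suppose $G/H$ is weakly complex, so that there is a trivial bundle $\epsilon$ over $G/H$ for which $\T(G/H)\oplus\epsilon$ carries a complex structure. Restricting this complex bundle to the submanifold $H'/H$ keeps it complex, and using the decomposition above we obtain a complex structure on
\[
\big(\T(G/H)\oplus\epsilon\big)\big|_{H'/H}\;=\;\T(H'/H)\oplus\delta\oplus\epsilon\big|_{H'/H}.
\]
Since $\delta$ and $\epsilon\big|_{H'/H}$ are both trivial, their sum is a trivial bundle, and therefore $\T(H'/H)$ becomes complex after adding a trivial bundle; that is, $H'/H$ is weakly complex. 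The only point requiring care is the identification of the normal bundle of the fiber as a trivial bundle, which I expect to be the main (though mild) obstacle; once the local triviality of the homogeneous fibration is invoked, everything else is formal, exactly as in Lemma \ref{prod}.
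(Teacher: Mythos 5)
Your proof is correct, and its skeleton coincides with the paper's: both arguments reduce the lemma to showing that the normal bundle of the fiber $H'/H$ inside $G/H$ is trivial, so that $\T(G/H)\big|_{H'/H}$ is stably isomorphic to $\T(H'/H)$, after which restricting the complex structure on $\T(G/H)\oplus\epsilon$ finishes the job. Where you diverge is in the justification of that key triviality. You invoke the local triviality of the homogeneous fibration $G/H\to G/H'$ (a classical but not entirely free fact, cf. \cite[16.14.9]{d}) and a tubular neighborhood to identify the normal bundle with the pullback of $\T_{eH'}(G/H')$; in fact all you really need is that $\pi$ is a submersion, since $d\pi$ already trivializes the normal bundle of each fiber. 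The paper instead stays entirely within the homogeneous category: writing $\gg=\hh'\oplus\mm$ and $\hh'=\hh\oplus\mm'$, it observes that the normal bundle is the bundle over $H'/H$ associated to the representation $\ad_H$ on $\mm$, which tautologically extends to a representation of $H'$, and any associated bundle coming from a representation of the full group $H'$ is trivial. The authors explicitly remark after their proof that your route works for arbitrary locally trivial fibrations and that their representation-theoretic argument is there precisely to avoid quoting local triviality. So your approach buys greater generality (any fibration with trivialized fiber normal bundles), while theirs buys self-containedness within elementary Lie theory; both are valid.
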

\begin{proof}
One can decompose the Lie algebras $\gg$ and $\hh'$ as $\gg=\hh'\oplus\mm$ and 
$\hh'=\hh\oplus\mm'$, where $\mm$ and $\mm'$ are the orthogonal complements of 
$\hh'$ in $\gg$ and $\hh$ in $\hh'$ with respect to some 
$\ad_{H'}$-invariant scalar product on $\gg$. The tangent bundle of the fiber $H'/H$ is associated to $H'$ via
the isotropy representation of $H$ on $\mm'$ and the tangent bundle of the total space $G/H$ is associated to $G$ via
the isotropy representation of $H$ on $\mm\oplus \mm'$. The restriction of $\T(G/H)$ to the fiber $H'/H$ is 
thus the direct sum of $\T(H'/H)$ and the bundle associated to $H'$ via the representation $\ad_H$ on $\mm$. This
representation tautologically extends to a $H'$ representation, thus showing that the normal bundle 
of the fiber is trivial. This implies  that the restriction of $\T(G/H)$ to $H'/H$ and $\T(H'/H)$ are stably
isomorphic, so $\T(H'/H)$ is weakly complex.
\end{proof}

Note that this result is valid for all locally trivial fibrations, since
the normal bundle of each fiber of a locally trivial fibration is trivial. 
The elementary proof above just avoids using the classical fact 
(see {\em e.g.} \cite[16.14.9]{d}) that $G/H \rightarrow G/H'$ is a locally trivial fibration.

As a partial converse to the above results, we describe two instances where 
the total space of a homogeneous fibration carries (weakly)
complex structures.

\begin{elem}\label{fib}
Let $G$ be a compact Lie group and let $H$ and $H'$ be two closed subgroups of $G$, such that
$H \subset H' \subset G$. If the base $G/H'$ and the fiber $H'/H$ of the fibration $G/H \rightarrow G/H'$ 
carry invariant almost complex structures, then the total space $G/H$ carries an invariant
almost complex structure too.
\end{elem}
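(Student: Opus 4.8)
The plan is to carry out the whole argument at the Lie algebra level, exploiting the standard dictionary between invariant almost complex structures on a homogeneous space and $\Ad$-invariant complex structures on its isotropy representation. First I would fix an $\Ad_G$-invariant scalar product on $\gg$ (available since $G$ is compact) and decompose $\gg$ exactly as in the proof of Lemma \ref{l2}, namely $\gg=\hh'\oplus\mm$ and $\hh'=\hh\oplus\mm'$, where $\mm=(\hh')^\perp$ and $\mm'\subset\hh'$ is the orthogonal complement of $\hh$. Then $\mm'$ is the isotropy representation carrying $\T(H'/H)$, $\mm$ is the isotropy representation carrying $\T(G/H')$, and $\mm\oplus\mm'$ is the isotropy representation of $H$ carrying $\T(G/H)$.

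The key structural observation is the $\Ad_H$-invariance of both summands of $\mm\oplus\mm'$. Since the scalar product is $\Ad_G$-invariant, the subspace $\mm=(\hh')^\perp$ is $\Ad_{H'}$-invariant, hence \emph{a fortiori} $\Ad_H$-invariant because $H\subset H'$; the subspace $\mm'$ is $\Ad_H$-invariant by construction. Thus $\mm\oplus\mm'$ is a direct sum of two $\Ad_H$-invariant subspaces, which is what makes a block-diagonal construction legitimate.

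It then remains to translate the hypotheses and assemble the structure. An invariant almost complex structure on the base $G/H'$ amounts to an $\Ad_{H'}$-invariant endomorphism $J_\mm$ of $\mm$ with $J_\mm^2=-\id$, and an invariant almost complex structure on the fiber $H'/H$ to an $\Ad_H$-invariant endomorphism $J_{\mm'}$ of $\mm'$ with $J_{\mm'}^2=-\id$. Restricting $J_\mm$ along the inclusion $H\subset H'$, it is in particular $\Ad_H$-invariant. I would then set $J:=J_\mm\oplus J_{\mm'}$ on $\mm\oplus\mm'$; being block-diagonal with $\Ad_H$-invariant blocks it is $\Ad_H$-invariant, and $J^2=-\id$ since each block squares to $-\id$. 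This $J$ is precisely the isotropy datum of an invariant almost complex structure on $G/H$, which finishes the proof.

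I do not expect a genuine obstacle here: the only point requiring care is that the almost complex structure on the base is only supplied with $\Ad_{H'}$-invariance, but this automatically descends to $\Ad_H$-invariance under $H\subset H'$, so the two structures live on complementary $\Ad_H$-invariant subspaces and simply add up. The single modelling choice that must be made correctly is to use a bi-invariant (rather than merely $\ad_{H'}$-invariant) scalar product, so that $\mm$ and $\mm'$ are \emph{simultaneously} $\Ad_H$-invariant and the direct-sum endomorphism is well defined.
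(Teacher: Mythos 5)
Your proposal is correct and follows essentially the same route as the paper: decompose $\gg=\hh\oplus\mm'\oplus\mm$, observe that the base and fiber hypotheses give an $\ad_{H'}$-invariant complex structure on $\mm$ and an $\ad_H$-invariant one on $\mm'$, and take their direct sum, which is $\ad_H$-invariant since $H\subset H'$. The extra care you take about choosing a bi-invariant scalar product so that both summands are simultaneously $\Ad_H$-invariant is a reasonable elaboration of what the paper leaves implicit.
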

\begin{proof}
As above we can write $\gg=\hh'\oplus\mm=\hh\oplus\mm'\oplus\mm$. The hypothesis ensures the existence
of an $\ad_H$-invariant complex structure on $\mm'$ and of an $\ad_{H'}$-invariant complex structure on $\mm$.
Their direct sum thus defines an $\ad_H$-invariant complex structure on $\mm\oplus \mm'$.
\end{proof}

\begin{elem}\label{stack} Let $H_k\subset H_{k-1}\subset\ldots\subset H_1\subset G$ be a sequence of
embeddings of closed subgroups of a compact Lie group $G$. If $G/H_1$ has a weakly complex tangent bundle and
$H_i/H_{i+1}$ has an invariant almost complex structure for every $1\le i\le k-1$, then $G/H_k$ has a weakly complex tangent bundle.
\end{elem}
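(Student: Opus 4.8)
The plan is to analyze the tangent bundle of $G/H_k$ through the homogeneous fibration $\pi\colon G/H_k\to G/H_1$, exactly in the spirit of Lemmas \ref{l2} and \ref{fib}. Fix an $\ad_G$-invariant scalar product on $\gg$ and decompose $\gg=\hh_k\oplus\mm_{k-1}\oplus\ldots\oplus\mm_1\oplus\mm_0$, where $\mm_0$ is the orthogonal complement of $\hh_1$ in $\gg$ and $\mm_i$ is the orthogonal complement of $\hh_{i+1}$ in $\hh_i$ for $1\le i\le k-1$. Then $\T(G/H_k)$ is the bundle associated to $G$ via the isotropy representation of $H_k$ on $\mm_0\oplus\mm_1\oplus\ldots\oplus\mm_{k-1}$. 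First I would split off the summand $\mm_0$: since $\mm_0$ carries the restriction to $H_k$ of the isotropy representation of $H_1$, the associated bundle $G\times_{H_k}\mm_0$ is canonically $\pi^*\T(G/H_1)$, and hence $\T(G/H_k)=\pi^*\T(G/H_1)\oplus\mathcal{V}$, where $\mathcal{V}:=G\times_{H_k}(\mm_1\oplus\ldots\oplus\mm_{k-1})$ is the bundle tangent along the fibers $H_1/H_k$.

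The key observation is that $\mathcal{V}$ is a complex vector bundle. Indeed, the assumption that $H_i/H_{i+1}$ carries an invariant almost complex structure means precisely that $\mm_i$ admits an $\ad_{H_{i+1}}$-invariant complex structure. Since $H_k\subset H_{i+1}$ for every $1\le i\le k-1$, each such complex structure is in particular $\ad_{H_k}$-invariant, so their direct sum endows $\mm_1\oplus\ldots\oplus\mm_{k-1}$ with an $\ad_{H_k}$-invariant complex structure. This descends to a complex structure on the associated bundle $\mathcal{V}$. Note that this is where the full chain of hypotheses is used simultaneously, which dispenses with any separate induction on $k$.

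It then remains to combine this splitting with the weak complexity of $G/H_1$. By Lemma \ref{alt} there exist a complex bundle $\gamma$ and a trivial bundle $\delta$ over $G/H_1$ with $\T(G/H_1)\oplus\gamma=\delta$; pulling back along $\pi$ gives $\pi^*\T(G/H_1)\oplus\pi^*\gamma=\pi^*\delta$ with $\pi^*\gamma$ complex and $\pi^*\delta$ trivial. Adding $\mathcal{V}$ and using the previous splitting yields $\T(G/H_k)\oplus\pi^*\gamma=\pi^*\delta\oplus\mathcal{V}$. To conclude via Lemma \ref{alt} I would choose, as in its proof, a complex bundle $\tilde{\mathcal{V}}$ with $\mathcal{V}\oplus\tilde{\mathcal{V}}$ trivial; then $\T(G/H_k)\oplus(\pi^*\gamma\oplus\tilde{\mathcal{V}})$ is trivial with $\pi^*\gamma\oplus\tilde{\mathcal{V}}$ complex, so $G/H_k$ is weakly complex. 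The only real bookkeeping — reconciling the complex bundle $\mathcal{V}$ with the required trivial complement, including any parity issue in the ranks — is exactly what Lemma \ref{alt} is designed to absorb, so I expect no genuine obstacle; the one point to verify carefully is the naturality identity $G\times_{H_k}\mm_0\cong\pi^*\T(G/H_1)$ underlying the tangent bundle splitting.
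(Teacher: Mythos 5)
Your proof is correct and follows essentially the same route as the paper: the tangent bundle of $G/H_k$ is split as the pull-back of $\T(G/H_1)$ plus the vertical bundle, which is complex by the invariant almost complex structures on the fibers. The only (cosmetic) differences are that you handle all the summands $\mm_1,\ldots,\mm_{k-1}$ at once instead of inducting to reduce to $k=2$, and that you spell out via Lemma \ref{alt} the final step (a weakly complex bundle plus a complex bundle is weakly complex), which the paper leaves implicit.
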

\begin{proof}
By induction, it is clearly enough to prove the case $k=2$. As before, we decompose 
the Lie algebra $\gg=\hh_1\oplus\mm=\hh_2\oplus\mm'\oplus\mm$. By assumption, $\mm'$ has an $\ad_{H_2}$-invariant
complex structure $J'$.
The tangent bundle of $G/H_2$ decomposes as
$$\T(G/H_2)=(G\times_{\ad_{H_2}}\mm)\oplus (G\times_{\ad_{H_2}}\mm').$$
The first bundle is just the pull-back to $G/H_2$ of $\T(G/H_1)$, and is thus weakly complex, whereas the second bundle 
clearly has an invariant complex structure induced by $J'$. This proves the lemma.
\end{proof}

\section{Weakly complex inner symmetric spaces}

For the convenience of the reader we recall 
our previous classification results of weakly complex quaternion-K\"ahler manifolds and inner symmetric spaces.

\begin{ath}\cite[Th. 1.1]{gms} \label{t11}
Let $M^{4n}$, $n\ge 2$, be a compact quaternion-K\"ahler manifold of
positive type, which is not isometric to the complex Grassmannian
$\Gr_2(\CM^{n+2})$. Then the tangent bundle $\T M$ is not weakly complex.
\end{ath}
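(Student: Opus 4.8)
The plan is to exploit the special structure of the tangent bundle of a quaternion-K\"ahler manifold together with an index-theoretic integrality obstruction, the exceptional case $\Gr_2(\CM^{n+2})$ being singled out by the behaviour of the second Betti number.

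First I would recall the structure theory. A compact quaternion-K\"ahler manifold of positive type is simply connected and spin, its holonomy is contained in $\Sp(n)\Sp(1)$, and its complexified tangent bundle splits as $\T M\otimes\CM\cong E\otimes_\CM H$, where $E$ is the rank $2n$ complex bundle associated to the $\Sp(n)$-factor and $H$ is the rank $2$ bundle associated to the $\Sp(1)$-factor. Neither $E$ nor $H$ is in general globally defined as a complex vector bundle; only suitable even tensor products (such as $S^2H$ and $E\otimes H$) descend to honest bundles on $M$. By Salamon's theorem, the second Betti number $b_2(M)$ vanishes unless $M$ is isometric to the complex Grassmannian $\Gr_2(\CM^{n+2})$. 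This dichotomy is exactly what isolates the excluded case.

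Next I would translate the weakly complex hypothesis into cohomological data. By Lemma \ref{alt}, if $\T M$ is weakly complex then the stable tangent bundle carries a complex structure, hence $M$ admits a $\mathrm{Spin}^c$-structure whose determinant line has first Chern class $c_1\in H^2(M;\ZM)$, and the Pontryagin classes of $M$ are then determined by the resulting stable Chern classes through the standard relation $p(M)=c\cdot\bar c$. Since $M$ is simply connected, $H^2(M;\ZM)$ is torsion-free, and the hypothesis $M\neq\Gr_2(\CM^{n+2})$ gives $b_2(M)=0$; therefore $c_1=0$. Thus a weakly complex structure would force a stable $\SU$-structure, and in particular would make a square root of $\det H$ globally available, compatible with the spinor formalism.

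The heart of the argument is then an integrality computation. Using $\T M\otimes\CM=E\otimes H$ I would evaluate the index of the Dirac operator twisted by suitable tensor powers of $E$ (equivalently, the relevant twisted $\hat A$-characteristic numbers), expressing everything through the quaternion-K\"ahler curvature data. On one hand these indices are forced to vanish by the standard quaternion-K\"ahler vanishing theorems; on the other hand, a stable complex structure with $c_1=0$ would contribute an extra term, coming from the now globally defined square root of $\det H$, rendering one of these indices a nonzero integer. Comparing the two computations yields the contradiction for every $n\ge 2$. The main obstacle is precisely this last step: producing an index whose vanishing (from the quaternion-K\"ahler structure) is genuinely incompatible with its value (forced by the hypothetical complex structure), uniformly in $n$ and without appealing to the still-open conjecture that positive quaternion-K\"ahler manifolds are symmetric. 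In the symmetric (Wolf space) case one can sidestep this by writing $M=G/H$ and computing all the relevant indices explicitly via the Borel--Weil--Bott theorem and Weyl's dimension formula, reducing the contradiction to an arithmetic inequality; the challenge is to carry out the analogous bookkeeping for a general positive quaternion-K\"ahler manifold.
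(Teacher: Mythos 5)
First, a point of order: the paper does not prove this statement at all --- Theorem \ref{t11} is imported verbatim from \cite[Th.~1.1]{gms} and merely restated ``for the convenience of the reader''. So the only meaningful comparison is with the proof in \cite{gms}. Your proposal correctly reconstructs the general setting of that proof: the local splitting $\T M\otimes\CM\cong E\otimes H$, the role of the LeBrun--Salamon rigidity theorem ($b_2(M)=0$ unless $M\cong\Gr_2(\CM^{n+2})$, which is exactly what isolates the excluded case), the reduction of ``weakly complex'' to a stable complex bundle with $c_1=0$ via $b_2=0$ and torsion-freeness of $H^2$, and the intention to derive a contradiction from twisted Dirac indices. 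This is the right skeleton.

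However, there is a genuine gap, and you name it yourself: you never produce the index whose two evaluations disagree, i.e.\ the actual contradiction. ``Comparing the two computations yields the contradiction'' is asserted, not established, and your closing paragraph concedes that the key step remains to be done. As it stands this is a plan, not a proof. In \cite{gms} this step is carried out concretely and does \emph{not} require the symmetric-space machinery you fall back on: starting from a complex bundle $\gamma$ with $\T M\oplus\gamma$ trivial (the analogue of Lemma \ref{alt}), the Chern character of $\gamma$ is pinned down by that of $\T M=E\otimes H$, and the index of the Dirac operator twisted by suitable bundles built from $\gamma$ and the symmetric powers $\Sym^pH$ is evaluated in two ways, playing the vanishing $\ind(D\otimes\Sym^pH\otimes\cdot)=0$ for $p<n$ against the nonvanishing of the extremal index when $b_2=0$; the Borel--Weil--Bott/Weyl-dimension bookkeeping you invoke is the tool for the \emph{symmetric space} theorem (Theorem \ref{t13}), not for this one. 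Two further inaccuracies: a positive quaternion-K\"ahler $4n$-manifold is in general \emph{not} spin (its $w_2$ is $n$ times the Marchiafava--Romani class, so for $n$ odd only $\HM\PM^n$ is spin), which is precisely why the twists by $\Sym^pH$ with $p\equiv n\pmod 2$ must be used to make the operators globally defined; and the phrase about a ``square root of $\det H$'' is not meaningful, since $\det H$ is already topologically trivial wherever $H$ exists --- the relevant global issue is the existence of $H$ itself.
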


\begin{ath}\cite[Th. 1.3]{gms} \label{t13}
An irreducible component of a simply connected inner 
symmetric space of compact type admitting a
weak almost complex structure is isomorphic to an even-dimensional
sphere, or to a Hermitian symmetric space or (conceivably) to the
exceptional symmetric space
${\rm E} _7/({\rm SU} (8)/\mathbb{Z} _2)$. 
\end{ath}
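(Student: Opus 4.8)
The plan is to reduce the statement to a representation-theoretic integrality question and to run it case by case over the classification of irreducible inner symmetric spaces of compact type. First I would recall that such a space is $M=G/H$ with $G$ simple and simply connected and $H$ the fixed-point group of an inner involution, so that $\rk(H)=\rk(G)$; the complete list is the classical one (Besse, pp. 312--314). Two families are immediately weakly complex and account for the stated exceptions: the Hermitian symmetric spaces, where $H$ is the centralizer of a torus and $M$ even carries an integrable complex structure, and the even-dimensional spheres $S^{2n}=\SO(2n+1)/\SO(2n)$, whose tangent bundle is stably trivial since $\T S^{2n}\oplus\RM\cong\RM^{2n+1}$. In all remaining cases $H$ is semisimple, so $b_2(M)=0$, and the goal is to show that $\T M$ is \emph{not} weakly complex, with the single possible exception of $\E_7/(\SU(8)/\ZM_2)$.

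For the negative part the key mechanism is index theory. A weakly complex tangent bundle is a stable almost complex structure, hence determines a $\Spin^c$ structure whose determinant class $c_1\in H^2(M;\ZM)$ reduces to $w_2(M)$ modulo $2$. Atiyah--Singer then forces, for \emph{every} complex homogeneous vector bundle $E=G\times_H V$, the integrality
$$\mathrm{ind}(D_E)=\int_M \hat{A}(M)\,e^{c_1/2}\,\ch(E)\in\ZM.$$
Because $H$ is semisimple we have $H^2(M;\QM)=0$, so $c_1$ is torsion and the factor $e^{c_1/2}$ is rationally trivial; the right-hand side is therefore a \emph{rational number} computable purely from the root data of $(\gg,\hh)$ (equivalently from the Pontryagin classes of $M$), and is independent of which stable complex structure one started from. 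On a homogeneous space this same quantity is evaluated by the Borel--Weil--Bott theorem as an alternating sum over $\W(G)/\W(H)$ of dimensions of $G$-representations, each supplied by Weyl's dimension formula. The strategy is then to exhibit, for each remaining $M$, a single twisting bundle $E$ for which this Weyl-dimension computation produces a value with a denominator that is not cleared, i.e. a non-integer, thereby contradicting the displayed integrality and ruling out any weakly complex structure.

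The computation is organized along the explicit list of inner symmetric pairs, in each of which the roots of $\mm$ are known, and I expect the main obstacle to be twofold. First, the bookkeeping of locating, in each of the finitely many families, a twisting weight $\mu$ whose associated index is fractional: this amounts to arranging that Weyl's formula lands on a prime denominator not absorbed by the numerator, and is delicate precisely because the unwisely chosen $E$ (for instance $E$ trivial) typically gives a vanishing contribution. Second, there is a genuine subtlety in whether $M$ is Spin, which fixes the admissible torsion values of $c_1$ entering $e^{c_1/2}$ and hence the precise $\rho$-type shift of $\mu$. The space $\E_7/(\SU(8)/\ZM_2)$ is exactly the case in which \emph{all} such integrality tests are satisfied, so the present method is inconclusive there; this is the origin of the word ``conceivably'' in the statement, and eliminating this last case by a finer argument is precisely what Theorem \ref{e7} of the present paper accomplishes.
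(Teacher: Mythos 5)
This statement is not proved in the present paper at all: it is imported verbatim from \cite{gms}, whose proof the introduction describes precisely as a combination of the Atiyah--Singer index theorem applied to twisted Dirac operators, the Borel--Weil--Bott theorem and Weyl's dimension formula --- which is exactly the strategy you outline, down to the role of the torsion determinant class $c_1$ when $H$ is semisimple and the origin of the word ``conceivably'' for $\E_7/(\SU(8)/\ZM_2)$. Be aware, though, that what you have written is a correct plan rather than a proof: the entire content of the argument in \cite{gms} is the case-by-case selection of twisting weights and the verification via Weyl's formula that the resulting characteristic number is non-integral (together with the separate treatment of the quaternion-K\"ahler symmetric spaces by Theorem \ref{t11}), and none of these computations is carried out in your proposal.
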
 

Using the methods developed in this paper we are now in position to rule out the exceptional case  $\E_7/\A_7$ in the above theorem. 

\begin{ath}\label{e7}
The tangent bundle of the exceptional symmetric space $\E_7/\A_7$ is not weakly complex.
\end{ath}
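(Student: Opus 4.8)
The plan is to apply the index-theoretic obstruction that the authors developed in \cite{gms}: if $\E_7/\A_7$ were weakly complex, then by Lemma \ref{alt} its tangent bundle would admit a stable complex structure, and hence all the characteristic-class constraints forced by such a structure would have to hold. Concretely, a weakly complex structure on $\T M$ determines an integer $\hat A$-genus type invariant via the index of Dirac operators twisted by the associated complex line (or higher) bundles. The symmetric space $M=\E_7/\A_7$ is inner, so the Dirac operator and its twists are $\E_7$-equivariant, and the Atiyah--Singer index theorem computes these indices as alternating sums of dimensions of $\E_7$-representations. The strategy is to exhibit a single twisting bundle for which the index predicted by a hypothetical weakly complex structure is incompatible with the integer produced by the Borel--Weil--Bott theorem.

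First I would recall from \cite{gms} the precise obstruction: on a spin inner symmetric space $G/H$ of even dimension $2m$, a stable almost complex structure on $TM$ induces, through the splitting principle applied to the formal Chern roots $\pm x_j$ of the complexified tangent bundle, a preferred square root of the Euler class and hence a well-defined twist of the spinor bundle; the resulting twisted Dirac index must be an integer, and moreover its value is pinned down by the $G$-representation theory of $H$ acting on $\mm=\gg/\hh$. For $\E_7/\A_7=\E_7/(\SU(8)/\ZM_2)$ the isotropy representation $\mm$ is the real $70$-dimensional module underlying $\L^4\CM^8$, the fourth exterior power of the standard $\SU(8)$-representation, so the Chern roots are the weights $\pm(e_{i}+e_{j}+e_k+e_l)$ and everything can be written explicitly in the weight lattice of $\A_7$. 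Using Weyl's dimension formula together with Borel--Weil--Bott, I would compute the relevant index as a specific integer and compare it against the value that weak complexity would force.

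The key steps, in order, are: (i) identify $M=\E_7/\A_7$ as the inner symmetric space with isotropy module $\L^4\CM^8$ and record its Chern roots in the $\A_7$ weight lattice; (ii) write down the index of the spinor bundle twisted by the candidate bundle coming from a stable complex structure, reducing it via Atiyah--Singer and Borel--Weil--Bott to an alternating sum over the Weyl group $\W(\E_7)/\W(\A_7)$ of dimensions of $\E_7$-modules; (iii) show that weak complexity would force a particular parity or divisibility on this integer; and (iv) carry out the (large but finite) representation-theoretic computation and obtain a contradiction with that parity or divisibility. Step (iv) is where the explicit numerics of \cite{gms} get reused, so the genuinely new content is in verifying that the $\E_7/\A_7$ case, which had been left open because the relevant index did not vanish for elementary reasons, nevertheless fails the integrality or parity test.

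The hard part will be step (iii)--(iv): producing a \emph{single} twisting representation whose index is both computable in closed form and sharp enough to detect the obstruction. A weakly complex structure only constrains $\T M$ \emph{stably}, so many of the crude characteristic-number tests (for instance the vanishing of the top Chern class versus the Euler characteristic) are automatically satisfied and give no information; one must instead find a twist sensitive to the particular square root of the Euler class that a stable complex structure would single out. I expect this to require choosing the twisting weight so that, after applying Bott's cancellation, the alternating sum collapses to a manageable combination of $\E_7$-dimensions, and then checking that this combination is a non-integer (or violates the relevant congruence) by a direct evaluation of Weyl's dimension formula for $\E_7$. Once the numerics yield a fraction with nontrivial denominator, integrality of the Dirac index is contradicted, and we conclude that no stable complex structure — and hence no weakly complex structure — can exist on $\T(\E_7/\A_7)$.
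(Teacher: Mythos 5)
Your proposal is not a proof but a plan to re-run the index-theoretic machinery of \cite{gms} directly on $\E_7/\A_7$, and the plan has a gap exactly where you flag it: steps (iii)--(iv) are never carried out, and there is strong reason to believe they cannot be carried out as stated. The space $\E_7/(\SU(8)/\ZM_2)$ is the one case that the methods of \cite{gms} --- twisted Dirac indices computed via Atiyah--Singer, Borel--Weil--Bott and Weyl's dimension formula --- failed to exclude; that is precisely why Theorem \ref{t13} lists it as only ``conceivably'' weakly complex and why the present paper advertises closing this case as its first main achievement. Your argument amounts to asserting that some as-yet-unspecified twisting representation will produce a non-integral or parity-violating index, but you neither exhibit such a twist nor give any reason one should exist; the provenance of the problem suggests the opposite. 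As written, the ``hard part'' you identify is the entire content of the claim, and it is left unproved.

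The paper's actual proof takes a completely different and much more elementary route: it never computes an index on $\E_7/\A_7$ at all. Instead it uses the chain $\A_3\times\A_3\times\T^1=\mathrm{S}(\U(4)\times\U(4))\subset\A_7\subset\E_7$. Since $\A_7/(\A_3\times\A_3\times\T^1)$ is Hermitian symmetric, Lemma \ref{stack} shows that weak complexity of $\E_7/\A_7$ would propagate to $\E_7/(\A_3\times\A_3\times\T^1)$; fibering this space over $\E_7/G$, where $G$ is the centralizer of the $\T^1$ factor, Lemma \ref{l2} would then force the fiber $G/(\A_3\times\A_3\times\T^1)$ to be weakly complex. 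A root-system computation inside $\E_8$ identifies $G$ with $\D_6\times\T^1$ and the fiber with the real Grassmannian $\D_6/(\D_3\times\D_3)$ of $6$-planes in $\RM^{12}$, which is already known not to be weakly complex by Theorem \ref{t13}. In other words, the index theory is used only where it is known to work (on real Grassmannians, via \cite{gms}), and the new input is the fibration argument plus the identification of the centralizer --- none of which appears in your proposal. To salvage your approach you would have to actually produce the discriminating twist for $\E_7/\A_7$ itself, which the authors of \cite{gms} evidently could not do.
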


\begin{proof}
Assume for a contradiction that $\E_7/\A_7$ is weakly complex and consider the sequence of embeddings 
$$\A_3\times \A_3\times \T^1={\rm S}(\U(4)\times\U(4))\subset \SU(8)=\A_7\subset \E_7.$$
Since the fiber $\A_7/(\A_3\times \A_3\times \T^1)$ is Hermitian symmetric, the total space $\E_7/(\A_3\times \A_3\times \T^1)$ 
would be weakly complex by Lemma \ref{stack}. If $G$ denotes the centralizer
of the center $\T^1$ of $\A_3\times \A_3\times \T^1$ in $\E_7$, the fiber $G/(\A_3\times \A_3\times \T^1)$ of the fibration 
$\E_7/(\A_3\times \A_3\times \T^1)\to \E_7/G$ would be weakly complex by  Lemma \ref{l2}.

On the other hand, we claim that $G$ is isomorphic to $\D_6\times \T^1$ and that
the embedding $\A_3\times \A_3\times \T^1$ in $G$ is just the standard embedding of $\D_3\times \D_3\times \T^1$
in $\D_6\times \T^1$. This would then imply that the weakly complex manifold $G/(\A_3\times \A_3\times \T^1)$
is actually the real Grassmannian $\D_6/\D_3\times \D_3$ of 6-planes in $\RM^{12}$, 
thus contradicting Theorem \ref{t13}.

In order to prove our claim we need to study more carefully the embedding $\A_7\subset \E_7$
via the root systems. Recall first \cite{a96} that the root system of $\E_8$ is the disjoint union of the root
system of $\Spin(16)$ and the weights of the half-spin representation
$\Sigma^+_{16}$. It thus consists of the vectors $\pm e_i\pm e_j$,
$1\le i< j\le 8$ and
$$\frac 12 \sum_{i=1}^8 \e_i e_i,\qquad \e_i=\pm 1,\ \e_1\cdots
\e_8=1.$$
The vectors $\{e_i\}$ form an orthonormal basis of the maximal torus
$\RM^8$ of $\E_8$ with respect to some bi-invariant scalar product on the Lie algebra $\mathfrak{e}_8$
induced by the Killing form.
The root system of $\E_7$ is given by the set of roots of
$\E_8$ orthogonal to a fixed one, {\em e.g.} to $\a_0:=\tfrac12(e_1+\ldots+e_8)$:
$$\RR(\E_7)=\{\a\in\RR(\E_8)\ |\ \langle\a,\a_0\rangle=0\}.$$
The subset $\{e_i-e_j\ |\ 1\le i< j\le 8\}\subset \RR(\E_7)$ determines
the embedding $\A_7\subset\E_7$.

The roots of the subgroup $G\subset \E_7$ are those 
orthogonal to $e_1+e_2+e_3+e_4-e_5-e_6-e_7-e_8$, {\em i.e.}
$$\pm(e_i-e_j)\qquad\hbox{for}\ 1\le i<j\le 4\ \hbox{or}\ 5\le i<j\le 8$$
and
$$\frac 12 \sum_{i=1}^8 \e_i e_i,\qquad \e_i=\pm 1,\ \sum_{i=1}^4\e_i=\sum_{i=5}^8\e_i=0.
$$
The above system is isometric to the root system $\{\pm f_i\pm f_j,\ 1\le i<j\le 6\}$ of $\D_6\times \T^1$ by
defining
$$f_1=\frac12(e_1+e_2-e_3-e_4), \qquad f_4=\frac12(e_5+e_6-e_7-e_8),$$
$$f_2=\frac12(e_1-e_2+e_3-e_4), \qquad f_5=\frac12(e_5-e_6+e_7-e_8),$$
$$f_3=\frac12(e_1-e_2-e_3+e_4), \qquad f_6=\frac12(e_5-e_6-e_7+e_8),$$
$$f_7=e_1+e_2+e_3+e_4-e_5-e_6-e_7-e_8.$$
Moreover, this identification maps the roots 
$$\pm(e_i-e_j)\qquad\hbox{for}\ 1\le i<j\le 4\ \hbox{or}\ 5\le i<j\le 8$$
of $\A_3\times \A_3\times \T^1$ 
onto the roots 
$$\pm f_i\pm f_j\qquad\hbox{for}\ 1\le i<j\le 3\ \hbox{or}\ 4\le i<j\le 6$$
of $\D_3\times \D_3\times \T^1\subset\D_6\times \T^1$. This proves our claim and concludes the proof of the theorem.
\end{proof}

\section{The classification in the semi-simple case}

We now come back to the classification of weakly complex homogeneous spaces.
As a direct corollary of Lemma \ref{l1} and Lemma \ref{prod} we have:

\begin{epr}\label{p2}
Let $M = G/H$ be a compact simply connected homogeneous space with $\rk(G) = \rk(H)$. 
Then $M$ is weakly complex if and only if it is the product of 
homogeneous spaces $M_i$ with $M_i = G_i / H_i$, 
such that $M_i$ is weakly complex and each $G_i$ is a compact
simple Lie group.
\end{epr}

This proposition shows that the study of weakly complex equal rank homogeneous spaces $G/H$
reduces to the case where $G$ is simple. The first main step consists in the case where $H$ is semi-simple. The answer is provided by

\begin{ath}\label{t1}
Let $M = G/H$ be a simply connected equal rank compact homogeneous space such that $G$ is simple and
$H$ is semi-simple. Then $M$ is  weakly complex if and only if one of the following (exclusive) possibilities occurs:
\begin{enumerate}

\item[1.] \begin{enumerate} \item[a)]  $M$ is one of the seven spaces in the list \eqref{borel} of Proposition \ref{p1}.
\item[b)] $M=\E_8/(\A_5\times\A_2 \times \A_1)$.
\item[c)]  $M=\E_8/(\A_2)^4$.
\end{enumerate}

\item[2.]  \begin{enumerate}\item[a)]  $M=\SM^{2n}=\B_n/\D_n$ for $n\ge 2$.
\item[b)]  $M=\C_n/(\C_1)^n$ for $n\ge 3$.
\item[c)]  $M=\F_4/\D_4$.
\end{enumerate}
\end{enumerate}
Conversely, the spaces in 1. carry invariant almost complex structures and those in 2. 
have stably trivial (and thus weakly complex) tangent bundle but do not carry any invariant almost complex structure.
\end{ath}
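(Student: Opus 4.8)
The asserted equivalence splits into an easy and a hard implication, plus the concluding dichotomy. For the \emph{if} direction, an invariant almost complex structure makes $\T M$ a complex bundle, so every space in item~1.\ is weakly complex: the seven spaces of \eqref{borel} carry such a structure by Proposition \ref{p1}, and the two remaining $\E_8$-spaces do so by Hermann's root-theoretic criterion \cite{h55} (for $\E_8/(\A_2)^4$ one may alternatively stack invariant complex structures along $(\A_2)^4\subset\E_6\times\A_2\subset\E_8$ using Lemma \ref{fib}). A stably trivial bundle is \emph{a fortiori} weakly complex, so the spaces in item~2.\ are weakly complex by \cite{sw86} (with $\SM^{2n}$ being classical). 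Finally, by Hermann \cite{h55} none of the spaces in item~2.\ admits an invariant almost complex structure, so items~1.\ and~2.\ are mutually exclusive, as claimed.

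The content of the theorem is the \emph{only if} direction, and my plan organizes it around a single ruling-out principle. Suppose that for a given $H$ one can interpolate a closed subgroup $H\subseteq B\subseteq G$ so that the fiber $B/H$ of the fibration $G/H\to G/B$ is either a positive quaternion-K\"ahler manifold of real dimension at least $8$ other than a complex Grassmannian $\Gr_2(\CM^{m+2})$, or an inner symmetric space which is neither an even sphere, nor Hermitian, nor $\E_7/\A_7$. Then $B/H$ is not weakly complex by Theorems \ref{t11}, \ref{t13} and \ref{e7}, whence $G/H$ is not weakly complex by Lemma \ref{l2}. The entire classification thus amounts to producing such a ``bad fiber'' for every $H$ outside the list, and to confirming that the listed spaces genuinely survive.

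For classical $G$ I first enumerate the candidate $H$ by Lemma \ref{class1}, noting that semisimplicity forbids unitary factors. There is nothing to prove for $G=\A_n$, since by Lemma \ref{l3} it has no proper semi-simple subgroup of full rank. For $G=\C_n$ one has $H=\C_{p_1}\times\cdots\times\C_{p_k}$ with $k\ge 2$; if some $p_i\ge 2$, amalgamating $\C_{p_i}$ with a second factor $\C_{p_j}$ yields a subgroup $B$ with $B/H=\C_{p_i+p_j}/(\C_{p_i}\times\C_{p_j})$, a quaternionic Grassmannian of dimension $4p_ip_j\ge 8$ and hence a bad fiber; therefore all $p_i=1$ and $M=\C_n/(\C_1)^n$, which is $\SM^4$ for $n=2$ and item~2.b)\ for $n\ge 3$. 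For $G=\B_n$ or $\D_n$ one has $H=\B_m\times\D_{p_1}\times\cdots\times\D_{p_k}$ (with $m=0$ in the $\D$ case and every $p_i\ge 2$); amalgamating two $\D$-factors produces a real-Grassmannian bad fiber $\D_{p_i+p_j}/(\D_{p_i}\times\D_{p_j})$, so at most one $\D$-factor can survive. The surviving $H$ are then the maximal symmetric subgroups, and Theorem \ref{t13} discards every genuine real Grassmannian, leaving only the even sphere $\B_n/\D_n=\SM^{2n}$ (item~2.a)).

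The exceptional groups form the core of the argument and the principal obstacle. Here I enumerate all conjugacy classes of semi-simple full-rank subgroups by iterating Borel--de Siebenthal (Proposition \ref{p1}) along the extended Dynkin diagram, a finite but delicate bookkeeping. At the maximal level the semi-simple subgroups are either the seven spaces of \eqref{borel}, which are weakly complex (item~1.a)), or the symmetric ones: the quaternion-K\"ahler Wolf spaces $\G_2/\SO(4)$, $\F_4/(\C_3\times\A_1)$, $\E_6/(\A_5\times\A_1)$, $\E_7/(\D_6\times\A_1)$ and $\E_8/(\E_7\times\A_1)$, together with $\F_4/\B_4$, $\E_8/\D_8$ and $\E_7/\A_7$, all of which are bad fibers (taking $B=G$) by Theorems \ref{t11}, \ref{t13} and \ref{e7}. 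For the proper descendants I again seek a bad fiber: a descent through an $\A$-type factor is impossible by Lemma \ref{l3}, a descent inside $\D_8$, $\D_6$ or $\B_4$ reproduces real-Grassmannian fibers, and a descent inside a Wolf space reproduces quaternionic-Grassmannian fibers, so Lemma \ref{l2} applies in each case, with exactly three exceptions. These survivors are $\E_8/(\A_5\times\A_2\times\A_1)$ and $\E_8/(\A_2)^4$, which carry invariant almost complex structures and so belong to items~1.b)\ and~1.c), and $\F_4/\D_4$, which arises from $\D_4\subset\B_4\subset\F_4$ with sphere fiber $\B_4/\D_4=\SM^8$ (precluding any bad fiber) and which is weakly complex precisely because its tangent bundle is stably trivial (item~2.c)). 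The essential difficulty is the exhaustive verification that these three alone escape the bad-fiber mechanism: one must track conjugacy classes carefully and, for a subgroup admitting several chains to $G$ (such as the two realizations of $(\A_1)^4$ in $\F_4$), locate the particular interpolating $B$, typically a classical symmetric subgroup, that exhibits a bad fiber.
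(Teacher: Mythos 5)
Your proposal is correct and takes essentially the same route as the paper: reduction via Lemma \ref{l2} to ``bad'' inner symmetric or quaternion-K\"ahler fibers killed by Theorems \ref{t11}, \ref{t13} and \ref{e7}, amalgamation of factors for classical $G$ via Lemma \ref{class1}, iterated Borel--de Siebenthal descent for exceptional $G$, and the same resolution of the $\F_4/(\A_1)^4$ case by recognizing the single subgroup $(\A_1)^4=(\D_2)^2\subset\D_4\subset\B_4\subset\F_4$ through a second chain --- the conjugacy of the two chain-realizations being exactly the explicit root-system computation that the paper carries out and you defer as bookkeeping. One cosmetic slip: your definition of a bad fiber excludes $\E_7/\A_7$ even though you later (correctly) use it as one; the intended formulation is that Theorem \ref{e7} upgrades Theorem \ref{t13} so that every non-sphere, non-Hermitian inner symmetric fiber is bad.
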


\begin{proof}
If $H$ is maximal in $G$, Proposition \ref{p1} shows that either we are in case 1.a), 
or $M$ is an irreducible inner symmetric space.
In the latter situation, Theorem \ref{t13} together with Theorem \ref{e7} imply that either $M$ is an even dimensional sphere, so we are in case 2.a), 
or it is Hermitian symmetric (which is impossible since $H$ is semi-simple).

We thus may assume from now on that $H$ is not maximal in $G$. Let $H_1\subset G$ 
be a maximal connected closed subgroup of $G$ containing $H$. By an obvious inductive procedure one can construct
a sequence $H := H_{k} \subset H_{k-1} \subset \ldots \subset H_1 \subset H_{0}:= G $ ($k\ge 2$) of connected closed subgroups of $G$,
such that $H_{i+1}$ is maximal in $H_{i}$ for $0\le i\le k-1$.
Since $G/H$ fibers over $G/H_i$ with fiber $H_i/H$, Lemma \ref{l2} shows that $H_i/H$ is weakly complex for all $i$.
Moreover, since $H$ is semi-simple, Corollary \ref{c21} shows that the groups $H_i$ are semi-simple for all $i$.

On the other hand, Proposition~\ref{p1} shows that  $G/H_1$ either belongs to the
list \eqref{borel} of Proposition \ref{p1}, or is an irreducible inner symmetric space. 

{\bf Case 1:} $G/H_1$ belongs to list \eqref{borel}. By Lemma \ref{l3}, among the seven spaces in that list,
the only one which might occur is $G/H_1=\E_8/(\E_6\times \A_2)$, and $H_2=K\times \A_2$ for some maximal subgroup 
$K\subset \E_6$ of rank 6. By Proposition~\ref{p1} again, $H_1/H_2=\E_6/K$ is either inner symmetric or belongs
to the list \eqref{borel}. 

If $\E_6/K$ is inner symmetric, using the classification of symmetric spaces (\cite[pp. 312--314]{besse})
we get $K=\A_5\times \A_1$, so $H_2=\A_2\times\A_5\times \A_1$ 
and by Lemma \ref{l3} we must have $k=2$ {\em i.e.} $H_2=H$. On the other hand 
$H_1/H=\E_6/(\A_5\times \A_1)$ is a quaternion-K\"ahler symmetric space which is 
not weakly complex by  Theorem \ref{t11}, thus contradicting Lemma \ref{l2}.

If $\E_6/K$ belongs
to the list \eqref{borel},
the only possibility is $K=(\A_2)^3$, so $H_2=(\A_2)^4$ and  applying Lemma \ref{l3} again we see that $k=2$, {\em i.e.} $H_2=H$.
This shows that $M$ is the space in case 1.c).

{\bf Case 2:} $G/H_1$ is an irreducible inner symmetric space. 
Going through the list of these spaces (\cite[pp. 312--314]{besse}), 
and keeping in mind that $H_1$ is semi-simple, we distinguish several possibilities:

I. $G=\A_n$. This case is impossible by Lemma \ref{l3}.

II. $G=\B_n$. By Lemma \ref{class1} b), there exist integers $m\ge 0$, $p_i\ge 2$ for $1\le i\le k$, with $m+p_1+\ldots+p_k=n$
such that $H=\B_{m}\times\D_{p_1}\times\ldots\times \D_{p_k}$, diagonally embedded in $\B_n$. 
Since $H$ is not maximal in $G$, we either have $m\ge 1$, $k\ge 1$ or $m=0$, $k\ge 2$. If $m\ge 1$, 
the inclusion $H\subset H':=\B_{m+p_1}\times \D_{p_2}\times\ldots\times \D_{p_k}\subset G$ induces a fibration of $G/H$ 
over $G/H'$ with fiber $H'/H=\B_{m+p_1}/(\B_{m}\times\D_{p_1})$.
By Lemma \ref{l2}, the real Grassmannian $H'/H=\B_{m+p_1}/(\B_{m}\times \D_{p_1})$ 
has to be weakly complex, contradicting  Theorem \ref{t13}. If $m=0$, let $H'$ 
be the subgroup $\D_{p_1+p_2}\times\D_{p_3}\times\ldots\times \D_{p_k}$ of $G$ containing $H$. 
By Lemma \ref{l2} again, the real Grassmannian $H'/H=\D_{p_1+p_2}/(\D_{p_1}\times \D_{p_2})$ 
has to be weakly complex, contradicting  Theorem \ref{t13}. 

III. $G=\C_n$. By Lemma \ref{class1} c), there exist $k\ge 2$ integers $p_i\ge 1$ with $p_1+\ldots+p_k=n$
such that $H$ is conjugate to $\C_{p_1}\times\ldots\times \C_{p_k}$, diagonally embedded in $\C_n$. Since 
$H$ is not maximal in $\C_n$, we must have $k\ge 3$.
Assume that one of the $p_i$'s is larger than $1$ (say $p_1\ge 2$ for simplicity). 
The inclusion $H\subset H':=\C_{p_1+p_2}\times \C_{p_3}\times\ldots\times \C_{p_k}\subset G$ induces a fibration of $G/H$ 
over $G/H'$ with fiber $H'/H=\C_{p_1+p_2}/(\C_{p_1}\times\C_{p_2})$. By Lemma \ref{l2},
the quaternionic Grassmannian $H'/H=\C_{p_1+p_2}/(\C_{p_1}\times\C_{p_2)}$ 
has to be weakly complex, contradicting  Theorem \ref{t13} which says, 
in particular, that the only weakly complex quaternionic Grassmannian is the sphere $\SM^4=\C_2/(\C_1\times\C_1)$.
Thus $p_i=1$ for all $i$, and we are in case 2.b). 

IV. $G=\D_n$. By Lemma \ref{class1} d), 
there exist $k\ge 2$ integers $p_i\ge 2$ with $p_1+\ldots+p_k=n$
such that (up to conjugation) $H=\D_{p_1}\times\ldots\times \D_{p_k}$, diagonally embedded in $\D_n$. 
Like before, let $H'$ be the subgroup $\D_{p_1+p_2}\times\D_{p_3}\times\ldots\times \D_{p_k}$ of $G$ containing $H$. 
By Lemma \ref{l2}, the real Grassmannian $H'/H=\D_{p_1+p_2}/(\D_{p_1}\times \D_{p_2})$ 
has to be weakly complex, contradicting Theorem \ref{t13} again. 

V. $G=\G_2$ and $H_1=\A_1\times\A_1$. This case is impossible since by Lemma \ref{l3}, $H$, which is
a proper subgroup of $H_1$, cannot be semi-simple.

VI. $G=\F_4$ and $H_1=\C_3\times \A_1$. By Lemma \ref{l1}, every proper semi-simple rank 4 
subgroup of $H_1$ is of the form $K\times \A_1$
with $K\subset \C_3$. By Lemma \ref{l2}, $H_1/H=\C_3/K$ has to be weakly complex. Like in III. above, the only possibility 
is $K=(\C_1)^3=(\A_1)^3$, so $M=\F_4/(\A_1)^4$. In order to understand the embedding
$(\A_1)^4\subset\C_3\times \A_1\subset\F_4$, recall \cite{a96} that the root system of $\F_4$ 
is the disjoint union of the root
system of $\B_4$ and the weights of the spin representation
$\Sigma_{9}$. It thus consists of the vectors $\pm e_i\pm e_j$, 
$1\le i< j\le 4$, $\pm e_i$, $1\le i\le 4$ and
$$\frac 12 \sum_{i=1}^4 \e_i e_i,\qquad \e_i=\pm 1.$$
The embedding $\C_3\times \A_1\subset\F_4$ is determined by the subset of roots
$$\{\pm e_1,\pm e_2,\pm e_1\pm e_2, \pm(e_3+e_4), \frac12(\pm e_1\pm e_2\pm(e_3+e_4))\}\cup\{\pm(e_3-e_4)\}\subset\RR(\F_4).
$$
Indeed, the first subset on the left is isometric to the root system of $\C_3$ 
$$\RR(\C_3)=\{\pm f_i\pm f_j, \pm2f_i\}_{1\le i\le 3}$$
by taking $f_1=\frac12(e_1+e_2),\ f_2=\frac12(e_1-e_2),\ f_3=\frac12(e_3+e_4)$. On the other hand, the embedding 
$(A_1)^3\subset \C_3$ corresponds to the subset of roots $\{\pm 2f_i\}_{1\le i\le 3}$ of $\RR(\C_3)$, so finally 
the embedding of $(\A_1)^4$ into $\F_4$ corresponds to the subset of roots $\{\pm e_1\pm e_2,\pm e_3\pm e_4\}$.
Consequently, $(\A_1)^4=(\D_2)^2$ can also be embedded in $\F_4$ through the sequence of inclusions 
$$(\A_1)^4=\D_2^2\subset \D_4\subset\B_4\subset \F_4.$$
If $M=\F_4/(\A_1)^4$ were weakly complex, the same would hold by Lemma \ref{l2} for the real Grassmannian 
$\widetilde{\Gr}_4(\RM^8)=\D_4/(\D_2\times\D_2)$, which would contradict  Theorem \ref{t13}.

VII. $G=\F_4$ and $H_1=\B_4$. Since $H_1/H=\B_4/H$ is weakly complex, 
the argument in II. shows that the only possibility is $H=\D_4$, so $M=\F_4/\D_4$ is the space in case 2.c).

VIII. $G=\E_6$ and $H_1=\A_5\times\A_1$. By Lemma \ref{l3} once again, $H$ cannot be semi-simple.

IX. $G=\E_7$ and $H_1=\A_7$ or $H_1=\D_6\times A_1$. The first case is excluded by Lemma \ref{l3}
and in the second case, we obtain like in case IV. above that $H_1/H$ is not weakly complex. 

X. $G=\E_8$ and $H_1=\D_8$ or $H_1=\E_7\times \A_1$. In the first case, the argument in case IV. above shows that 
$H_1/H$ cannot be weakly complex. In the second case, by Proposition \ref{p1}, $H_2$ is one of the three groups
$\A_7\times\A_1$, $\A_5\times\A_2\times\A_1$, or $\D_6\times \A_1\times \A_1$. 
For the first two of these groups, Lemma \ref{l3} implies that $H=H_2$. If $H=\A_7\times\A_1$, Lemma \ref{l2} implies that the quotient
$H_1/H=\E_7/\A_7$ is weakly complex, contradicting Theorem \ref{e7}. If $H=\A_5\times\A_2\times\A_1$, we are in case 1.b). 
Finally, if $H_2=\D_6\times \A_1\times \A_1$ we cannot have
$H=H_2$, since then the fiber $H_1/H_2=\E_7/(\D_6\times\A_1)$ would be a compact quaternion-K\"ahler
manifold which is not weakly complex by Theorem \ref{t11}. Thus $H$ is a proper subgroup of $H_2$ and 
the argument in case IV. combined with Lemma \ref{l1} show that this case is impossible either.

For the converse statement, we recall that
the spaces in 1.a) carry an invariant almost complex structure by \cite[p. 500]{bh58}.
By Lemma \ref{fib}, the same holds for the two spaces in 1.b) and 1.c) because of the fibrations
$$\E_7/(\A_5\times \A_2)\hookrightarrow\E_8/(\A_5\times\A_2 \times \A_1)\to \E_8/(\E_7\times \A_1)$$ and 
$$\E_6/(\A_2)^3 \hookrightarrow\E_8/(\E_6\times\A_2)\to \E_8/(\A_2)^4$$ 
whose bases and fibers all
carry invariant almost complex structures (see also \cite[Thm. 5.3]{h55}). 

The spaces in 2.a)--2.c) are all weakly complex since their tangent bundle is stably trivial (see \cite[p. 159]{sw86}). 
The fact that they do not carry invariant almost complex structures follows from Hermann's classification \cite[Thm. 5.3]{h55}, 
however one can give a direct argument. Indeed, the spheres $\B_n/\D_n$ are symmetric spaces and
$D_n$ is semi-simple for $n\ge 2$, so \cite[Prop. 4.2]{h55} applies. For the remaining two cases one can use
\cite[Prop. 5.3]{h55} applied to the chains of subgroups $L\subset L'\subset G$
$$(\C_1)^n\subset\C_2\times (\C_1)^{n-2}\subset\C_n\qquad\hbox{and}\qquad \D_4\subset\B_4\subset\F_4,$$
for which $L'/L$ is either $\C_2/(\C_1)^2=\SM^4$ or $\B_4/\D_4=\SM^8$.
\end{proof}

\section{The classification in the non semi-simple case}

We now consider the case where $H$ is not semi-simple. Before stating the main result we need
to study in more detail some family of homogeneous spaces which will appear later on in the classification and
which require a different type of arguments.

Let $\HM^n$ denote the standard representation of $\C_n=\Sp(n)$. This representation has a quaternionic structure given by right multiplication with quaternions. We view $\HM^n$ as complex representation with respect to the right multiplication with $i$. The complex exterior power
$\Lambda^2\HM^n$ has a {\em real} structure defined by $r(x\otimes y):=xj\otimes yj$. Let $\llbracket\Lambda^2\HM^n\rrbracket$ denote the
real part of $\Lambda^2\HM^n$ with respect to $r$. It is generated by elements of the form $\llbracket x\otimes y\rrbracket:=x\otimes y+r(x\otimes y)$.

\begin{elem}\label{iso}
The restriction to $(\C_1)^n$ of $\llbracket\Lambda^2\HM^n\rrbracket$ is isomorphic, as real representation, to the direct sum
$\RM^n\oplus\mm_n$ between the trivial $n$-dimensional representation and the
isotropy representation of the manifold $\C_n/(\C_1)^n$.
\end{elem}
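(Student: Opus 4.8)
The plan is to restrict everything to the subgroup $(\C_1)^n=\Sp(1)^n$ and to decompose both sides into pieces indexed by the factors, which can then be matched one by one. Write $E:=\HM^n$ for the standard representation of $\C_n$, viewed as a complex representation via right multiplication with $i$, and let $J$ be the quaternionic structure on $E$ given by right multiplication with $j$, so that the real structure reads $r(x\wedge y)=Jx\wedge Jy$ on $\Lambda^2 E$.

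First I would restrict $E$ to $\Sp(1)^n$, obtaining $E=\bigoplus_{a=1}^n V_a$, where $V_a\cong\CM^2$ is the standard representation of the $a$-th factor $\Sp(1)=\SU(2)$ (and trivial under the remaining factors), each carrying the quaternionic structure $J_a$ induced by $J$. Since $J$ preserves every $V_a$, the real structure $r$ respects the branching
\be
\Lambda^2 E=\bigoplus_{a=1}^n\Lambda^2 V_a\ \oplus\ \bigoplus_{a<b}(V_a\otimes V_b),
\ee
so I can compute its real part summand by summand. On each one-dimensional $\Lambda^2 V_a$ (the invariant symplectic form of $V_a$, hence trivial as an $\Sp(1)^n$-representation) a short computation with a basis $e_1,e_2$ satisfying $J_ae_1=e_2$ gives $r(e_1\wedge e_2)=e_1\wedge e_2$; thus $r$ is a real structure there with real part $\RM$, and $\bigoplus_a\llbracket\Lambda^2 V_a\rrbracket\cong\RM^n$ is exactly the trivial summand. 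On each mixed summand $V_a\otimes V_b$ with $a<b$, the real structure acts as $J_a\otimes J_b$, which squares to $J_a^2\otimes J_b^2=\Id$ and is therefore again a real structure, with real part $\llbracket V_a\otimes V_b\rrbracket$ a real form of real dimension $4$.

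It then remains to identify $\bigoplus_{a<b}\llbracket V_a\otimes V_b\rrbracket$ with $\mm_n$. For this I would use $\sp(n)\otimes\CM\cong\Sym^2 E$ and $\sp(1)^n\otimes\CM\cong\bigoplus_a\Sym^2 V_a$ together with the branching $\Sym^2 E=\bigoplus_a\Sym^2 V_a\oplus\bigoplus_{a<b}(V_a\otimes V_b)$, which identifies $\mm_n\otimes\CM$ with $\bigoplus_{a<b}(V_a\otimes V_b)$. The crucial point is that each $V_a\otimes V_b$ is irreducible of \emph{real} type (a tensor product of two quaternionic representations has Frobenius--Schur indicator $(-1)(-1)=+1$), and that these summands are pairwise non-isomorphic as $\Sp(1)^n$-representations. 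Hence any real structure on $\bigoplus_{a<b}(V_a\otimes V_b)$ preserves the summands, and a real-type irreducible has a real form unique up to isomorphism; therefore the two real forms $\mm_n$ and $\bigoplus_{a<b}\llbracket V_a\otimes V_b\rrbracket$ are isomorphic. Combining with the trivial summand yields the claimed isomorphism $\llbracket\Lambda^2\HM^n\rrbracket|_{(\C_1)^n}\cong\RM^n\oplus\mm_n$.

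The step I expect to be the main obstacle is precisely this last identification of real forms. The conceptual route above (reality type plus uniqueness of real forms) is the shortest, but if a hands-on argument is preferred one can instead realize $\mm_n$ explicitly as the space of off-diagonal quaternionic skew-Hermitian matrices $\bigoplus_{a<b}\HM$, on which $\Sp(1)_a\times\Sp(1)_b$ acts by $q\mapsto g_a\,q\,\bar g_b$, and match this real model directly with the real part of $J_a\otimes J_b$ on $V_a\otimes V_b$. A final dimension check ($n+(2n^2-2n)=2n^2-n=\dim_\CM\Lambda^2 E$) confirms that no pieces are lost.
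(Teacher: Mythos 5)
Your proof is correct and follows essentially the same route as the paper's: branch $\HM^n$, $\Lambda^2\HM^n$ and $\Sym^2\HM^n$ under $(\C_1)^n$, identify $\mm_n\otimes\CM$ with $\bigoplus_{a<b}V_a\otimes V_b$, and peel off the trivial line $\Lambda^2 V_a$ from each factor. The only difference is that you explicitly justify the passage from the complexified isomorphism to the real one (checking that $r$ preserves each summand and invoking reality types and uniqueness of real forms), a step the paper's proof leaves implicit.
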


\begin{proof}
The restriction to  $(\C_1)^n$ of the standard $\C_n$ representation on $\HM^n$ decomposes as
$\HM^n = \oplus^n_{i=1} H_i$, where $H_i \cong  \HM^1$ denotes the $(\C_1)^n$ representation obtained
by composing the projection onto the $i$-th factor of $(\C_1)^n$ with the standard representation of
$\C_1$ on $\HM^1$. It is well known that the complexified Lie algebra of $\C_n$ can be identified with
$\mathrm{Sym}^2 \HM^n$, which as  $(\C_1)^n$ representation decomposes as
$$
\mathrm{Sym}^2 \HM^n =\bigoplus^n_{i=1} \mathrm{Sym}^2 H_i \, \oplus \, \bigoplus_{i<j}( H_i\otimes H_j ).
$$
It follows that the complexified isotropy representation $\mm_n\otimes\CM$ of the homogeneous space $\C_n/(\C_1)^n$
is just $ \oplus_{i<j} H_i\otimes H_j$.

Similarly we can decompose $\Lambda^2\HM^n$ and find
$
\Lambda^2\HM^n = \bigoplus^n_{i=1} \Lambda^2 H_i \oplus \bigoplus _{i<j} (H_i\otimes H_j).
$
The summands $ \Lambda^2 H_i $ are all one-dimensional and thus trivial $(\C_1)^n$ representations.
We thus obtain
$$
\Lambda^2\HM^n = \CM^n \oplus (\mm_n\otimes\CM) ,
$$
whence 
$
\llbracket\Lambda^2\HM^n\rrbracket = \RM^n \oplus \mm_n 
$ as claimed.
\end{proof}

\begin{ere}\label{triv} A corollary of this result is the fact, already mentioned before, that the homogeneous space 
$\C_n/(\C_1)^n$ is stably parallelizable.
Indeed, since any vector bundle on a homogeneous space $G/H$ associated to the restriction to $H$ of a $G$ representation is trivial, we have that
$$\RM^n\oplus\T (\C_n/(\C_1)^n)=\C_n\times_{(\C_1)^n}(\RM^n\oplus\mm_n)=\C_n\times_{(\C_1)^n}\llbracket\Lambda^2\HM^n\rrbracket$$
is a trivial vector bundle.
\end{ere}

Consider now the family of homogeneous spaces $G/H$ where 
$G=\C_n$ for some $n\ge 2$ and $H=(\C_1)^p\times U$ for some rank  $q$ subgroup
$U\subset \U(q)$ ($q=n-p$). The embedding of $H$ in $\C_n$ given by
$$(\C_1)^p\times U\subset (\C_1)^p\times \U(q)\subset (\C_1)^p\times\C_q\subset \C_n,$$
where $\U(q)\subset\C_q$ is the standard embedding obtained by viewing the complex entries of a matrix as quaternions,
and the last embedding is diagonal.

\begin{epr}\label{cn} The homogeneous spaces $\C_n/((\C_1)^p\times U)$ have weakly complex tangent bundle.
\end{epr}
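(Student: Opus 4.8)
The plan is to split the isotropy representation of $M=\C_n/H$ (where $H=(\C_1)^p\times U$) into a piece admitting an invariant complex structure and a purely ``quaternionic'' piece isomorphic to the isotropy representation of $\C_n/(\C_1)^n$, and to handle the latter topologically using Lemma \ref{iso} and Remark \ref{triv}. First I would reduce to $U=\U(q)$: since $U$ is a rank $q$ subgroup of $\U(q)$, the fibre $\U(q)/U$ of $\C_n/H\to\C_n/((\C_1)^p\times\U(q))$ is a generalized flag manifold, hence carries an invariant almost complex structure, so by Lemma \ref{stack} (applied to $H\subset(\C_1)^p\times\U(q)\subset\C_n$) it suffices to treat $H=(\C_1)^p\times\U(q)$. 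Writing $\mathcal W=\CM^q$ for the standard $\U(q)$-module and decomposing $\HM^n|_H=\big(\bigoplus_{i=1}^p H_i\big)\oplus \mathcal W\oplus\bar{\mathcal W}$ as in Lemma \ref{iso}, the identity $\sp(n)\otimes\CM=\Sym^2\HM^n$ yields
\[
\mm\otimes\CM=\Big(\Sym^2\mathcal W\oplus\textstyle\bigoplus_i H_i\otimes\mathcal W\Big)\oplus\text{c.c.}\;\oplus\;\bigoplus_{1\le i<j\le p}H_i\otimes H_j .
\]
The first two brackets are a representation together with its conjugate, so they give a summand $V$ with invariant complex structure, while the last summand is exactly the (pulled back) isotropy representation $\mm_p$ of $\C_p/(\C_1)^p$. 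Thus $\T M=(\C_n\times_H V)\oplus(\C_n\times_H\mm_p)$ with the first factor complex, and by Lemma \ref{alt} the whole problem reduces to showing that $\C_n\times_H\mm_p$ is weakly complex.

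Next I would recognise $\C_n\times_H\mm_p$ as the vertical tangent bundle $\T_{\mathrm{vert}}$ of the fibration $q\colon\C_n/H\to B:=\C_n/(\C_p\times\U(q))$ with fibre $F=\C_p/(\C_1)^p$, where $(\C_1)^p\subset\C_p$ is the standard subgroup. The base $B=\Sp(n)/(\Sp(p)\times\U(q))$ is almost complex (its isotropy representation decomposes into conjugate pairs exactly like the $V$-part above), and the fibre $F$ is stably parallelizable by Remark \ref{triv}. The key point is that the trivialisation $\T F\oplus\underline{\RM^p}\cong\underline{\llbracket\Lambda^2\HM^p\rrbracket}$ of Remark \ref{triv} is $\Sp(p)$-equivariant, because $\llbracket\Lambda^2\HM^p\rrbracket$ is the restriction of an $\Sp(p)$-module. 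Applying the associated bundle construction for the principal $\Sp(p)$-bundle $\C_n/\U(q)\to B$ then gives
\[
\T_{\mathrm{vert}}\oplus\underline{\RM^p}\;\cong\;q^{*}\,\llbracket\Lambda^2_\CM\xi\rrbracket,\qquad \xi:=\C_n\times_{\C_p\times\U(q)}\HM^p ,
\]
where $\xi$ is the tautological quaternionic bundle of rank $p$ over $B$. So everything is reduced to proving that $\llbracket\Lambda^2_\CM\xi\rrbracket$ is weakly complex over $B$.

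To attack this I would use that $\xi$ is stably trivial as a \emph{quaternionic} bundle: from $\HM^n|=\HM^p\oplus(\mathcal W\oplus\bar{\mathcal W})$ one has $\xi\oplus\zeta\cong\underline{\HM^n}$ with $\zeta=\C_n\times(\mathcal W\oplus\bar{\mathcal W})$. Expanding $\Lambda^2_\CM(\xi\oplus\zeta)$, which is a trivial bundle, and taking real parts yields
\[
\llbracket\Lambda^2_\CM\xi\rrbracket\oplus\llbracket\xi\otimes\zeta\rrbracket\oplus\llbracket\Lambda^2_\CM\zeta\rrbracket=\text{trivial}.
\]
Here $\llbracket\xi\otimes\zeta\rrbracket$ is the realification of $\xi\otimes_\CM\mathcal W$ (the real structure interchanges the two conjugate summands), hence complex, and $\llbracket\Lambda^2_\CM\zeta\rrbracket=(\Lambda^2_\CM\mathcal W)_{\RM}\oplus\llbracket\mathcal W\otimes\bar{\mathcal W}\rrbracket$, whose first summand is again complex. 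By Lemma \ref{alt} the weak complexity of $\llbracket\Lambda^2_\CM\xi\rrbracket$ therefore comes down to the single remaining summand $\mathfrak u(\mathcal W):=\llbracket\mathcal W\otimes\bar{\mathcal W}\rrbracket$, the adjoint (skew-Hermitian endomorphism) bundle of $\mathcal W$.

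I expect this last reduction to be the main obstacle. The bundle $\mathfrak u(\mathcal W)$ is associated to a representation of real type (the module $\uu(q)$ with the adjoint action), so it carries no fibrewise complex structure; this is exactly the manifestation of the fact — asserted in the main theorem — that these spaces admit no invariant almost complex structure. Proving that $\mathfrak u(\mathcal W)$ is nonetheless weakly complex must therefore be genuinely topological: one has to show that its class lies in the image of the realification map $K(B)\to KO(B)$, using in an essential way the quaternionic structure of $\xi$ together with the stable relation $\mathcal W\oplus\bar{\mathcal W}\oplus\xi\cong\underline{\CM^{2n}}$ (for instance by verifying that the $\eta$-multiplication obstruction to lifting the $KO$-class back to $K$-theory vanishes). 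This is the step at which the elementary representation-theoretic bookkeeping of the previous paragraphs no longer suffices.
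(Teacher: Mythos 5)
Up to its last step your argument runs parallel to the paper's proof: you split off a complex summand of the isotropy representation (the paper does this via the coadjoint--orbit fibration over $\C_n/(\C_p\times U)$, you do it directly inside $\Sym^2\HM^n$; the two decompositions agree), reduce everything to the bundle $\tau=\C_n\times_{H}\mm_p$, and then compare $\tau\oplus\RM^p$ with the trivial bundle associated to $\llbracket\Lambda^2\HM^n\rrbracket$, exactly as in \eqref{lam}. The divergence is in how the two remaining summands $\llbracket\Lambda^2\HM^q\rrbracket$ and $\llbracket\HM^q\otimes\HM^p\rrbracket$ are disposed of. The paper asserts that \emph{both} carry $H$-invariant complex structures induced by left multiplication with $i$ on $\HM^q$ and concludes at once by Lemma \ref{alt}. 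You obtain such a structure only for $\llbracket\HM^q\otimes\HM^p\rrbracket$ (your $\llbracket\xi\otimes\zeta\rrbracket$) and for the $(\Lambda^2\mathcal{W})_{\RM}$-part of $\llbracket\Lambda^2\HM^q\rrbracket$, and you are left with the summand $\uu(\mathcal{W})=\llbracket\mathcal{W}\otimes\bar{\mathcal{W}}\rrbracket$, whose weak complexity you do not establish: your final paragraph replaces the decisive step by a discussion of why it should be hard, and the proposed $K$-theoretic detour is neither carried out nor reduced to anything checkable. As a proof, this is a genuine gap.

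You should be aware, however, that your finer decomposition is pointing at a real difficulty in the published argument rather than at a trick you failed to find. As a $\U(q)$-module, $\llbracket\Lambda^2\HM^q\rrbracket$ is the realification of $\Lambda^2\mathcal{W}$ plus a real form of $\mathcal{W}\otimes\bar{\mathcal{W}}$; the latter contains the adjoint module $\su(q)$ with multiplicity one, and since this module is absolutely irreducible (and for $q=1$ the whole space is odd-dimensional), Schur's lemma forbids any invariant complex structure on that summand. Hence the operator $J\llbracket x\wedge x'\rrbracket=\llbracket ix\wedge x'\rrbracket$ of the paper cannot be an $H$-equivariant complex structure on all of $\llbracket\Lambda^2\HM^q\rrbracket$ once $U$ is non-abelian (indeed the only equivariant candidate, the symmetrization of left multiplication by $i$, vanishes identically on the $\mathcal{W}\otimes\bar{\mathcal{W}}$-part; for $U$ abelian the residual piece is a trivial bundle and causes no harm). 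So the step you could not complete is precisely the step the paper's proof does not actually supply either: along either route one still has to show that the bundle $\uu(\mathcal{W})$ associated to the tautological $\CM^q$-bundle is weakly complex, or find a splitting that avoids it. Flag this explicitly instead of deferring to the published argument.
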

\begin{proof} Let $T$ denote the center of $U$. It is clear that the centralizer of $T$ in $\C_n$ is $\C_p\times U$, so $\C_n/(\C_p\times U)$
has an invariant complex structure. In fact this space is a coadjoint orbit of $\C_n$ (see {\em e.g.} \cite[p. 234]{besse}). Its isotropy
representation, called $\nn$, is thus complex. Consider now the isotropy representation $\mm_p$ of the homogeneous space
$\C_p/(\C_1)^p$. Using the fibration $\C_n/((\C_1)^p\times U)\to \C_n/(\C_p\times U)$ we see that the isotropy
representation of the total space is the direct sum $\nn\oplus\mm_p$. Here $(\C_1)^p\times U$ acts by restriction from $\C_p\times U$ on
$\nn$ and the $U$ factor acts trivially on $\mm_p$. In order to finish the proof of the proposition we thus need to show that 
the associated bundle $\tau:=\C_n\times_{(\C_1)^p\times U}\mm_p$ is weakly complex.

Now, the restriction to $(\C_1)^p\times U$ of the representation $\HM^n$ of $\C_n$ decomposes as $\HM^n=\HM^q\oplus \HM^p$.
Correspondingly, the restriction to $(\C_1)^p\times U$ of $\llbracket\Lambda^2\HM^n\rrbracket$ decomposes as
\be\label{lam}\llbracket\Lambda^2\HM^n\rrbracket=\llbracket\Lambda^2\HM^q\rrbracket\oplus \llbracket\HM^q\otimes \HM^p\rrbracket\oplus \llbracket\Lambda^2\HM^p\rrbracket.\ee
The crucial observation is that the first two summands are {\em complex} representations of the group $(\C_1)^p\times U$. Indeed, the
{\em left} multiplication with $i$ on $\HM^q$ induces complex structures 
$$J\llbracket x\wedge x'\rrbracket:=\llbracket ix\wedge x'\rrbracket,\qquad J\llbracket x\otimes y\rrbracket:=\llbracket ix\otimes y\rrbracket$$
on $\llbracket\Lambda^2\HM^q\rrbracket$ and $\llbracket\HM^q\otimes \HM^p\rrbracket$ which are compatible with the $(\C_1)^p\times U$ action.
On the other hand, the $U$ factor in $(\C_1)^p\times U$ acts trivially on $\llbracket\Lambda^2\HM^p\rrbracket$, so by Lemma \ref{iso}, 
$\llbracket\Lambda^2\HM^p\rrbracket$ and $\RM^p\oplus \mm_p$ are isomorphic as $(\C_1)^p\times U$ representations. Since 
$\llbracket\Lambda^2\HM^n\rrbracket$ is a $\C_n$ representation, the associated vector bundle 
$\epsilon:=\C_n\times_{(\C_1)^p\times U}\llbracket\Lambda^2\HM^n\rrbracket$
is trivial. From \eqref{lam} we see that this trivial vector bundle can be written as the direct sum
$$\epsilon=(\C_n\times_{(\C_1)^p\times U}\llbracket\Lambda^2\HM^q\rrbracket)\oplus (\C_n\times_{(\C_1)^p\times U} \llbracket\HM^q\otimes \HM^p\rrbracket)
\oplus \RM^p\oplus\tau$$
of two complex vector bundles, a trivial rank $p$ vector bundle, and $\tau$. By taking the direct sum with a trivial real line bundle for $p$ odd, we see that there exists a complex bundle $\lambda$ and a trivial bundle
$\epsilon$ such that $\tau\oplus\lambda=\epsilon$. Thus $\tau$ is weakly complex by Lemma \ref{alt}.
\end{proof}

The remaining part of this section is devoted to the proof of the classification result in the non semi-simple case:

\begin{ath}\label{t2}
Let $M = G/H$ be a simply connected equal rank compact homogeneous space such that $G$ is simple and
$H$ is not semi-simple. Then $M$ is  weakly complex if and only if one of the following (exclusive) possibilities occurs:
\begin{enumerate}
\item[1.] \begin{enumerate} \item[a)]  $H$ is the centralizer of a torus in $G$.
\item[b)] $M$ is one of the four exceptional spaces $\E_8/(\A_5\times\A_2 \times \T^1)$, $\E_8/((\A_2)^3\times \A_1\times \T^1)$, 
$\E_8/((\A_2)^3\times\T^2)$ or $\E_7/((\A_2)^3\times \T^1)$.
\end{enumerate}
\item[2.]  \begin{enumerate}\item[a)]  $M=\B_n/(\D_{p}\times \U(q_1)\times\ldots\times \U(q_l))$, 
for some integers $p\ge 2$ and $q_i\ge 1$ for $1\le i\le l$ such that $n=p+q_1+\ldots+q_l$.
\item[b)]   $M=\C_n/((\C_{1})^p\times \U(q_1)\times\ldots\times \U(q_l))$, 
for some integers $p\ge 2$ and $q_i\ge 1$ for $1\le i\le l$ such that $n=p+q_1+\ldots+q_l$.
\item[c)]  $M$ is one of the exceptional spaces $\F_4/(\D_2\times \T^2)$ or $\F_4/((\C_1)^3\times\T^1)$.
\end{enumerate}
\end{enumerate}
The spaces in 1.~have invariant almost complex structures and those in 2.~have weakly complex tangent bundle, but do not carry any
invariant complex structure.
\end{ath}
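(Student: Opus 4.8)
The plan is to reduce the non-semi-simple case to the already settled semi-simple case (Theorem \ref{t1}) by factoring out the central torus of $H$. Write $T:=Z(H)^0$ for the identity component of the center of $H$ — a nontrivial torus, since $H$ is not semi-simple — and set $L:=Z_G(T)$, the centralizer of $T$ in $G$. Then $T\subset H\subset L$, the group $L$ is connected and is the centralizer of a torus, so $G/L$ carries an invariant complex structure (as recalled at the beginning of this section). Passing to a finite cover $Z(L)^0\times L_{ss}$ of $L$ and applying Corollary \ref{c21}, one gets $H=H_{ss}\cdot T$ with $H_{ss}:=H\cap L_{ss}$ semi-simple; the equal-rank condition $\rk L=\rk H$ then forces $T=Z(L)^0$ and $\rk H_{ss}=\rk L_{ss}$, whence $L/H\cong L_{ss}/H_{ss}$. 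If $H=L$ we are in case 1.a; otherwise $H_{ss}\subsetneq L_{ss}$, and since $G/H$ fibers over $G/L$ with fiber $L/H$, Lemma \ref{l2} forces $L_{ss}/H_{ss}$ to be weakly complex. Decomposing $L_{ss}$ into simple factors (Lemma \ref{l1}) and invoking Lemma \ref{prod} together with Theorem \ref{t1}, every simple factor of $L_{ss}/H_{ss}$ must appear in the list of Theorem \ref{t1}: either it carries an invariant almost complex structure (landing us ultimately in case 1), or it is one of the stably parallelizable spaces $\SM^{2n}=\B_n/\D_n$, $\C_n/(\C_1)^n$, $\F_4/\D_4$ (landing us in case 2).

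First I would run this machine through the classical groups using the explicit description of maximal-rank subgroups in Lemma \ref{class1}. The key observation is that the central torus $T$ sits entirely inside the unitary factors, so it acts trivially on the remaining orthogonal or symplectic factors and $L=Z_G(T)$ absorbs all of them into a single classical factor; for instance, for $G=\C_n$ with $H=\C_{p_1}\times\cdots\times\C_{p_k}\times\U(q_1)\times\cdots\times\U(q_l)$ one obtains $L=\C_{p_1+\cdots+p_k}\times\U(q_1)\times\cdots\times\U(q_l)$, so $L/H=\C_{p_1+\cdots+p_k}/(\C_{p_1}\times\cdots\times\C_{p_k})$. By the $\C_n$ analysis of Theorem \ref{t1} this fiber is weakly complex only when all $p_i=1$, producing exactly case 2.b; the analogous computation for $G=\B_n$ yields case 2.a (the fiber being a sphere $\B_p/\D_p$). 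For $G=\A_n$ every maximal-rank subgroup is already the centralizer of a torus, so only case 1.a occurs, and for $G=\D_n$ the fiber is a real Grassmannian $\D_{p_1+p_2}/(\D_{p_1}\times\D_{p_2})$, excluded by Theorem \ref{t13}. More generally, any configuration in which a factor of $L_{ss}/H_{ss}$ is a real or quaternionic Grassmannian other than a sphere, or a quaternion-K\"ahler space, is discarded exactly as in Theorem \ref{t1}, using Theorems \ref{t13} and \ref{t11}.

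The delicate part, and the one I expect to be the main obstacle, is the exceptional groups. The case $G=\G_2$ is immediate (its non-semi-simple maximal-rank subgroups are the centralizers of tori $\U(2)$, giving only case 1.a), but $\F_4,\E_6,\E_7,\E_8$ require real work. Here one enumerates non-semi-simple maximal-rank subgroups by iterating the Borel--de Siebenthal classification (Proposition \ref{p1}), and for each resulting chain must (i) discard the configurations whose fiber $L_{ss}/H_{ss}$ contains a quaternion-K\"ahler factor or the space $\E_7/\A_7$, using Theorems \ref{t11} and \ref{e7}, and (ii) identify the surviving spaces precisely. The identification step requires explicit root-system computations in the spirit of the proof of Theorem \ref{e7}: one must compute $Z_G(T)$ together with the embedding $H\hookrightarrow L$ at the level of roots, in order to recognize, for example, that the relevant fiber in $\F_4/(\D_2\times\T^2)$ is the sphere $\C_2/\D_2$, that in $\F_4/((\C_1)^3\times\T^1)$ it is $\C_3/(\C_1)^3$, and that in $\E_7/((\A_2)^3\times\T^1)$ it is $\E_6/(\A_2)^3$. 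This produces the exceptional spaces of 1.b and 2.c and, crucially, confirms both that they are not centralizers of tori and that they are genuinely distinct from the classical families.

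For the converse I would argue as follows. The spaces in 1.a carry invariant complex structures because $H$ is the centralizer of a torus, and those in 1.b carry invariant almost complex structures via Lemma \ref{fib} applied to the Borel--de Siebenthal fibrations identified above, whose base and fiber both admit invariant almost complex structures. The family 2.b is precisely Proposition \ref{cn}. For 2.a I would argue in parallel: decomposing $\RM^{2n+1}=\RM^{2p+1}\oplus W$ under $H=\D_p\times U$, with $W$ the underlying complex space of the unitary factors, shows that the isotropy representation of $\B_n/H$ splits off complex summands and a remainder $\tau=\B_n\times_H\RM^{2p}$; the relation $\tau\oplus\RM\oplus(\B_n\times_H W)=\B_n\times_H\RM^{2n+1}$ exhibits a trivial bundle as the sum of $\tau$, a trivial line, and the complex bundle $\B_n\times_H W$, so Lemma \ref{alt} shows that $\tau$, and hence $\T(\B_n/H)$, is weakly complex; the two $\F_4$ spaces of 2.c then follow by fibering over the appropriate coadjoint orbit and reducing to these families. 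Finally, the claim that the spaces in 2. carry no invariant almost complex structure follows from Hermann's criterion (Lemma \ref{lh}) applied to chains whose intermediate quotient is $\SM^4=\C_2/(\C_1)^2$ or $\SM^8=\B_4/\D_4$, and the mutual exclusivity of the listed cases is then a direct consequence of the distinction between admitting an invariant complex structure and being merely weakly complex.
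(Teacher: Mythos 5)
Your overall strategy is the same as the paper's: set $T=Z(H)^0$, fiber $G/H$ over $G/Z_G(T)$, use Lemma \ref{l2} and Lemma \ref{prod} to force each simple factor of the fiber into the list of Theorem \ref{t1}, run Lemma \ref{class1} for the classical groups, and handle the converses with Lemma \ref{fib}, Proposition \ref{cn} and Hermann's criterion (Lemma \ref{lh}). Your justification of case 2.a is genuinely different and rather pleasant: instead of the paper's re-embedding $\D_p\times\U(q_1)\times\ldots\times\U(q_l)\subset\D_n\subset\B_n$ followed by Lemma \ref{stack} (with $\B_n/\D_n$ stably trivial and $\D_n/H$ a coadjoint orbit), you split the trivial bundle $\B_n\times_H\RM^{2n+1}$ as $\tau\oplus\RM\oplus(\B_n\times_H W)$ with $W$ complex and $\tau=\B_n\times_H\RM^{2p}$ the non-complex part of the isotropy representation, then invoke Lemma \ref{alt}; this is the exact analogue of the paper's Proposition \ref{cn} for the $\B$-series and works.

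Two points need repair. First, your one-line treatment of 2.c has the fibration the wrong way around: if the coadjoint orbit is the \emph{base} (e.g.\ $\F_4/((\C_1)^3\times\T^1)\to\F_4/(\C_3\times\T^1)$ with weakly complex fiber $\C_3/(\C_1)^3$), no lemma in the paper applies — ``base with invariant complex structure, fiber weakly complex'' does not imply the total space is weakly complex, because the stabilizing bundle over the fiber need not extend invariantly. The working argument (Lemma \ref{stack}) takes the stably trivial space $\F_4/\D_4$ as base and the coadjoint orbits $\D_4/(\D_2\times\T^2)$, $\D_4/((\C_1)^3\times\T^1)$ as fibers. Second, your exceptional-group step is only a plan, and it omits the mechanism that actually terminates the enumeration: the paper observes that the first simple factor $K'_1$ with $K_1\subsetneq K'_1$ must have $K'_1/K_1$ on the list of Theorem \ref{t1}, that among the exceptional groups only $\F_4$ admits an equal-rank subgroup with a factor $\B_m$ or $\C_m$ ($m\ge 2$), and that no Lie algebra with a summand $\mathfrak{g}_2$, $\mathfrak{f}_4$ or $\mathfrak{e}_8$ embeds properly with equal rank — which instantly reduces the exceptional analysis to $\F_4$ plus the two pairs $(\E_7,\A_5\times\A_2)$ and $(\E_6,(\A_2)^3)$. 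In particular you do not address the elimination of the two spurious $\F_4$ candidates $(\B_3\times\T^1,\D_3\times\T^1)$ and $(\B_2\times\U(2),\D_2\times\U(2))$, which the paper discards by checking that the first group is its own centralizer-of-center and that the centralizer of the center of $\D_2\times\U(2)$ is $\C_3\times\T^1$ rather than $\B_2\times\U(2)$; without this check your list in 2.c would not be proved complete.
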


\begin{proof}
Since $H$ is not semi-simple, its center $T$ is a toral subgroup of rank $r\ge 1$. We denote by $H'\supset H$ 
the centralizer of $T$ in $G$. Using the convention from Remark \ref{rem} 
we can assume that $H=K\times T$ and $H'=K'\times T$ with $K\subset K'$. 
The manifold $M=G/H$ fibers over $M':=G/H'$ with fiber
$K'/K$. Note now that $K$ and $K'$ are both semi-simple and have rank $n-r$. 
If $K'=K$, then $H$ is the centralizer of a torus in $G$, so $G/H$ has an invariant complex structure and we are in case 1.a).

Assume from now on that $K$ is a proper subgroup of $K'$. Since $K'$ is semi-simple
one can write $K'=K'_1\times\ldots\times K'_m$
with $K'_i$ simple for every $i$. By Lemma \ref{l1} we have $K=K_1\times\ldots\times K_m$
with $K_i:=K\cap K'_i$ semi-simple and $\rk(K_i)=\rk(K'_i)$ for every $i$.

Since $M$ is weakly complex, the same holds for $K'/K$ (by  Lemma \ref{l2}). Now, $K'/K$ is the direct product of
the spaces $K'_i/K_i$, and each factor $K'_i/K_i$ is weakly complex  by 
Lemma \ref{prod}. Consequently each factor $K'_i/K_i$ is
either a point, or one of the spaces given by Theorem \ref{t1}. By permuting the subscripts if necessary,
one can assume that $K_1$ is a proper subgroup of $K'_1$.

A useful observation is that by Corollary \ref{c1}, if $G$ is a classical compact simple Lie group 
 ({\em i.e.} in one of the series $\A$--$\D$), then a closed subgroup $K'\subset G$ with $\rk(K')=\rk(G)$
has no direct factor of exceptional type. 

In our present situation, this means that if 
$G$ is a classical group, then $K'_1$ is a simple classical group, thus $K'_1/K_1$ is one of the spaces in cases 
2.a) or 2.b) in Theorem \ref{t1}. In particular, we must have $K'_1=\B_{n'}$ or $K'_1=\C_{n'}$ for some $n'\ge 2$.
By Lemma \ref{class1}, $G=\B_n$ for some $n\ge 3$ in the first case or $G=\C_n$ for some $n\ge 3$ in the second case.

If $G=\B_n$, Lemma \ref{class1} also says that there exist integers $m,k\ge 0,\ l\ge 1$, $p_i\ge 1$ for $1\le i\le k$,  $q_i\ge 1$ for $1\le i\le l$,
with $m+p_1+\ldots+p_k+q_1+\ldots+q_l=n$
such that $H$ is conjugate to $\B_{m}\times\D_{p_1}\times\ldots\times \D_{p_k}\times \U(q_1)\times\ldots\times \U(q_l)$. By renaming the groups
$\D_1$ into $\U(1)$, we can assume $p_i\ge 2$ for all $i$. It is then easy to check that the centralizer of the center of
$H$ in $\B_n$ is $H'=\B_{p}\times \U(q_1)\times\ldots\times \U(q_l)$, where $p=m+p_1+\ldots+p_k$.
From Theorem \ref{t1}, $H'/H$ is weakly complex if only if $m=0$ and $k=1$, so we are in case 2.a). Conversely, if this holds,
{\em i.e.} $H$ is of the type $\D_{p}\times \U(q_1)\times\ldots\times \U(q_l)\subset\B_n$, then 
we can also embed $H$ in $\B_n$ as follows:
$$H=\D_{p}\times \U(q_1)\times\ldots\times \U(q_l)\subset\D_n\subset\B_n.$$
Lemma \ref{stack} then shows that $\B_n/H$ is weakly complex. Indeed, $\B_n/\D_n$ has stably trivial tangent bundle and
$\D_n/H$ is a coadjoint orbit \cite[p. 231]{besse} so it has an invariant complex structure. 

If $G=\C_n$, Lemma \ref{class1} implies that  there exist integers $k\ge 0$, $l\ge 1$, $p_i\ge 1$ for $1\le i\le k$,  $q_i\ge 1$ for $1\le i\le l$,
with $p_1+\ldots+p_k+q_1+\ldots+q_l=n$
such that $H$ is conjugate to $\C_{p_1}\times\ldots\times \C_{p_k}\times \U(q_1)\times\ldots\times \U(q_l)$. 
Like above, we check that  the centralizer of the center of
$H$ in $\C_n$ is $H'=\C_{p}\times \U(q_1)\times\ldots\times \U(q_l)$, where $p=p_1+\ldots+p_k$.
From Theorem \ref{t1}, $H'/H$ is weakly complex if only if $p_i=1$ for all $i$, {\em i.e.}
$H=(\C_{1})^p\times \U(q_1)\times\ldots\times \U(q_l)\subset \C_n$ where $n=p+q_1+\ldots+q_l$, so
we are in case 2.b). Conversely, each space in case 2.b) is weakly complex by Proposition \ref{cn}.

Consider now the case where $G$ is exceptional and $K'_1$ is classical. By Theorem \ref{t1},
$K'_1$ is either $\B_m$ or $\C_m$ for some $m\ge2$. Now, Proposition \ref{p1} together with the list of symmetric spaces
of exceptional type show that if $G$ is one of
$\G_2$, $\E_6$, $\E_7$ or $\E_8$ and $H'$ is a closed subgroup of $G$ with $\rk(H')=\rk(G)$, then $H'$ has
no factor isomorphic to $\B_m$ or $\C_m$ for $m\ge2$. We thus necessarily have $G=\F_4$. 
By Proposition \ref{p1} and \cite[pp. 312--314]{besse}, the maximal rank 4 proper subgroups of $\F_4$ are 
$\A_2\times \A_2$,  $\B_4$, and $\C_3\times \A_1$. Since $K_1'$ occurs as factor in one of their subgroups,
the first case can not occur. In the last two cases, using Lemma
\ref{class1} b) and c) several times we see that  $(H',H)$ necessarily belongs to the following list:
$(\B_3\times \T^1,\D_3\times \T^1)$, $(\B_2\times \U(2),\D_2\times \U(2))$, 
$(\B_2\times \T^2,\D_2\times \T^2)$, or $(\C_3\times\T^1,(\C_1)^3\times\T^1)$.
The first two candidates actually do not occur. Indeed, from the root system of $\F_4$ described in Theorem \ref{t1},
 we easily see that 
$\D_3\times \T^1$ is equal to the centralizer of its center in $\F_4$, and the centralizer in $\F_4$ of the center of $\D_2\times \U(2)$
is $\C_3\times\T^1$, which contains $\B_2\times \U(2)$ as proper subgroup.
In the last two cases $H'$ is indeed the centralizer in $F_4$ of the center of $H$ and moreover 
both groups $\D_2\times \T^2$ and
$(\C_1)^3\times\T^1$ embed in $\D_4$ as coadjoint orbits \cite[p. 230]{besse}.
Since $\F_4/\D_4$ is stably trivial \cite{sw86}, Lemma \ref{stack} shows that
the corresponding homogeneous spaces $\F_4/((\C_1)^3\times\T^1)$ and
$\F_4/(\D_2\times \T^2)$ are weakly complex.

It remains to treat the case where $K'_1$ and $G$ are simple exceptional groups. Using Lemma~\ref{l1},
Proposition \ref{p1} and the classification of symmetric spaces, we observe that 
a Lie algebra containing a summand isomorphic to $\mathfrak{g}_2$, $\mathfrak{f}_4$ 
or $\mathfrak{e}_8$ can not be properly embedded in a Lie algebra of the same rank. 
Looking at the different cases in Theorem \ref{t1}, we see that the only possibilities 
for $(K'_1,K_1)$ are $(\E_7,\A_2\times \A_5)$ and $(\E_6,(\A_2)^3)$.

In the first case we get $G=\E_8$ and $H=\A_2\times \A_5\times \T^1\subset H':=\E_7\times \T^1\subset \E_8$. 
The resulting space $M=\E_8/(\A_2\times \A_5\times \T^1)$ has an invariant almost 
complex structure, as shown by Lemma~\ref{fib}
applied to the fibration of $M\to\E_8/(\E_7\times \T^1)$ with fiber $\E_7/(\A_2\times\A_5)$. Indeed, the base is complex
homogeneous, being the twistor space of the compact quaternion-K\"ahler manifold $\E_8/(\E_7\times \A_1)$ and
the fiber has an invariant almost complex structure by Proposition~\ref{p1}. This space was curiously overlooked in 
Hermann's classification \cite[Thm. 5.3]{h55}.

In the second case, we either have $G=\E_7$ and $H=(\A_2)^3\times \T^1\subset 
H':=\E_6\times \T^1\subset \E_7$, or $G=\E_8$, $H'$ is one of $\E_6\times \T^2$ 
or $\E_6\times \A_1\times \T^1$ and correspondingly $H$ is $(\A_2)^3\times \T^2$ 
or $(\A_2)^3\times \A_1\times \T^1$. In each case the resulting spaces 
have invariant almost complex structures by Lemma \ref{fib}. 

The fact that the spaces in 2. do not carry invariant almost complex structures follows from 
\cite[Prop. 5.3]{h55} applied to the chains of subgroups $L\subset L'\subset G$
 $$\D_{p}\times \U(q_1)\times\ldots\times \U(q_l)\subset \B_{p}\times \U(q_1)\times\ldots\times \U(q_l)\subset\B_n$$
$$(\C_{1})^p\times \U(q_1)\times\ldots\times \U(q_l)\subset\C_2\times(\C_{1})^{p-2}\times \U(q_1)\times\ldots\times \U(q_l)\subset\C_n$$
$$\D_2\times \T^2\subset\B_2\times \T^2\subset\F_4$$
$$ (\C_1)^3\times\T^1\subset \C_2\times\C_1\times \T^1\subset\F_4$$
for which $L'/L$ is either $\B_n/\D_n=\SM^{2n}$ or $\C_2/(\C_1)^2=\SM^4$.

This completes the proof of the theorem.
\end{proof}

\end{document}